\documentclass[article,12pt]{article}
\usepackage[lmargin=25mm, tmargin=25mm, textwidth=160mm, textheight= 221mm]{geometry}
\usepackage{tikz,multicol}
\usetikzlibrary{arrows,snakes,backgrounds}
\usepackage{amsthm,amsmath,amssymb}
\usepackage{graphicx}
\usepackage[colorlinks=true,citecolor=black,linkcolor=black,urlcolor=blue]{hyperref}
\usepackage{enumerate}

\theoremstyle{plain}
\newtheorem{theorem}{Theorem}
\newtheorem{lemma}[theorem]{Lemma}
\newtheorem{corollary}[theorem]{Corollary}
\newtheorem{proposition}[theorem]{Proposition}
\newtheorem{fact}[theorem]{Fact}

\theoremstyle{definition}

\theoremstyle{remark}

\newcommand{\tk}{\tilde{K}}
\newcommand{\ed}{{\rm ed}}
\newcommand{\forb}{{\rm Forb}}
\newcommand{\dist}{{\rm dist}}
\newcommand{\vk}{V(K)}

\newcommand{\vwk}{{\rm VW}(K)}
\newcommand{\vbk}{{\rm VB}(K)}
\newcommand{\ewk}{{\rm EW}(K)}
\newcommand{\ebk}{{\rm EB}(K)}
\newcommand{\egk}{{\rm EG}(K)}

\newcommand{\x}{{\bf x}}
\newcommand{\one}{{\bf 1}}
\newcommand{\mkp}{{\bf M}_K(p)}
\newcommand{\K}{\mathcal{K}}
\newcommand{\F}{\mathcal{F}}
\newcommand{\hh}{\mathcal{H}}



\title{\bf On the Edit Distance of Powers of Cycles}



\author{
Zhanar Berikkyzy \qquad  Ryan R. Martin \qquad Chelsea Peck\\
\small Department of Mathematics\\[-0.8ex]
\small Iowa State University\\[-0.8ex]
\small Ames, Iowa, U.S.A\\
\small\tt \{zhanarb,rymartin\}@iastate.edu
}


\date{August 30, 2015\\
\small Mathematics Subject Classifications: Primary 05C35; Secondary 05C80}

\begin{document}

\maketitle

\begin{abstract}
  The edit distance between two graphs on the same labeled vertex set is defined to be the size of the symmetric difference of the edge sets. The edit distance function of a hereditary property $\mathcal{H}$ is a function of $p\in [0,1]$ that measures, in the limit, the maximum normalized edit distance between a graph of density $p$ and $\mathcal{H}$.

  In this paper, we address the edit distance function for $\forb(H)$, where $H=C_h^t$, the $t^{\rm th}$ power of the cycle of length $h$. For $h\geq 2t(t+1)+1$ and $h$ not divisible by $t+1$, we determine the function for all values of $p$. For $h\geq 2t(t+1)+1$ and $h$ divisible by $t+1$, the function is obtained for all but small values of $p$. We also obtain some results for smaller values of $h$.

  \bigskip\noindent \textbf{Keywords:} edit distance, colored regularity graphs, powers of cycles
\end{abstract}

\section{Introduction} 

The edit distance in graphs was introduced independently by Axenovich, K\'{e}zdy, and Martin \cite{A-K-M} and by Alon and Stav \cite{A-S}. The question considered is ``Given a class of graphs $\hh$ what is the minimum number $m=m(n)$ such that for every graph on $n$ vertices, there is a set of $m$ edge-additions and edge-deletions that ensure the resultant graph is a member of $\hh$?'' For every hereditary property $\hh$, there is a $p=p(\hh)$ such that the Erd\H{o}s-R\'{e}nyi random graph $G(n,p)$ is asymptotically extremal~\cite{A-S}. A \emph{hereditary property} is a family of graphs that is closed under isomorphism and the taking of induced subgraphs.

The edit distance function of a hereditary property $\hh$ is a function of $p\in [0,1]$ that measures, in the limit, the maximum normalized edit distance between a graph of density $p$ and $\hh$. A \emph{principal hereditary property}, denoted $\forb(H)$, is a hereditary property that consists of the graphs with no induced copy of a single graph $H$. Most of the known edit distance functions are of the form $\forb(H)$. These include the cases where $H$ is a split graph~\cite{M_split} (including cliques and independent sets), complete bipartite graphs $K_{2,t}$~\cite{M-MK} and $K_{3,3}$~\cite{M-T} and cycles $C_h$ where $h$ is small~\cite{M_sym}. In this paper, we compute the edit distance function for powers of cycles.

For positive integers $t$ and $h$, the $t^{\rm th}$ power of a cycle of length $h$ is denoted $C_h^t$ and has vertex set $\{1,\ldots,h\}$, where two vertices are adjacent in $C_h^t$ if and only if their distance is at most $t$ in $C_h$. 

The notation in this paper primarily comes from \cite{M_survey}.
The \emph{edit distance} between graphs $G$ and $G'$ on the same labeled vertex set is denoted $\dist(G,G')$ and satisfies $\dist(G,G')=|E(G)\bigtriangleup E(G')|$. The edit distance between a graph $G$ and a hereditary property $\hh$ is
\begin{equation*}
{\rm dist}(G, \hh)=\min \lbrace \dist(G,G') : V(G)=V(G'), G'\in\hh \rbrace.
\end{equation*}

The edit distance function of a hereditary property $\hh$ measures the maximum distance of a density $p$ graph from $\hh$, i.e.
\begin{equation*}
\ed_{\hh}(p)=\lim_{n \rightarrow \infty} \max \lbrace \dist(G,\hh) : |V(G)|=n, |E(G)|=\lfloor p \tbinom{n}{2} \rfloor \rbrace / \tbinom{n}{2}.
\end{equation*}

Balogh and Martin~\cite{B-M} showed that this limit exists and is equal to $\underset{n\rightarrow\infty}{\lim} \mathbb{E}[\dist(G(n,p),\hh)]$, using the result of Alon and Stav in \cite{A-S}. The function has a number of interesting properties:
\begin{proposition}[Balogh-Martin~\cite{B-M}] \label{prop:basic}
   If $\hh$ is a hereditary property, then $\ed_{\hh}(p)$ is continuous and concave down over $p\in [0,1]$.
\end{proposition}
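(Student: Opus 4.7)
I will exploit the alternative description
\[
\ed_{\hh}(p) \;=\; \lim_{n\to\infty} \frac{\mathbb{E}[\dist(G(n,p),\hh)]}{\binom{n}{2}},
\]
which the authors attribute to Alon and Stav in the paragraph preceding the proposition. The approach is to reduce continuity and concavity to elementary properties of random graphs at varying densities, avoiding any direct appeal to the colored regularity graph framework.

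\textbf{Continuity.} I would prove the stronger statement that $\ed_{\hh}$ is $1$-Lipschitz. The triangle inequality for the edit distance yields
\[
\bigl|\dist(G,\hh)-\dist(G',\hh)\bigr|\;\leq\;|E(G)\bigtriangleup E(G')|
\]
for any graphs $G,G'$ on the same labeled vertex set. For $0 \leq p \leq q \leq 1$, the standard monotone coupling $G(n,p) \subseteq G(n,q)$ has expected symmetric difference of size $(q-p)\binom{n}{2}$. Taking expectations, dividing by $\binom{n}{2}$, and letting $n\to\infty$ yields $|\ed_{\hh}(p)-\ed_{\hh}(q)| \leq q-p$.

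\textbf{Concavity.} For each $H \in \hh$ with $V(H)=[n]$, a direct edge-by-edge calculation gives
\[
\mathbb{E}[\dist(G(n,p),H)] \;=\; |E(H)|(1-p) + \left(\tbinom{n}{2}-|E(H)|\right)p,
\]
which is affine in $p$. Consequently, for each $n$, the function
\[
\phi_n(p) \;:=\; \min\left\{ \frac{\mathbb{E}[\dist(G(n,p),H)]}{\binom{n}{2}} : H \in \hh,\; V(H)=[n]\right\}
\]
is a pointwise minimum of affine functions and hence concave on $[0,1]$. Since a pointwise limit of concave functions on a compact interval is concave, concavity of $\ed_{\hh}$ reduces to the identity $\lim_{n\to\infty}\phi_n(p) = \ed_{\hh}(p)$.

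\textbf{Main obstacle.} The substantive step is this last identity, which amounts to interchanging the minimum over $H$ with the expectation over $G(n,p)$ in the limit. The trivial direction $\phi_n(p) \geq \mathbb{E}[\dist(G(n,p),\hh)]/\binom{n}{2}$ is Jensen's inequality applied to the concave (indeed, minimum-of-affine) integrand; the nontrivial direction is where the work lies. I expect to obtain it via concentration: since $\dist(G(n,p),\hh)$ changes by at most $1$ when a single potential edge of $G(n,p)$ is toggled, McDiarmid's inequality gives that $\dist(G(n,p),\hh)/\binom{n}{2}$ concentrates to within $o(1)$ of its mean with high probability. Combined with the Alon--Stav observation that a ``universal'' near-optimal target $H^\ast\in\hh$ can be chosen for almost every realization of $G(n,p)$, this provides the matching upper bound on $\phi_n(p)$ and completes the proof.
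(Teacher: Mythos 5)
The paper does not prove Proposition~\ref{prop:basic}; it cites it from Balogh--Martin~\cite{B-M}, so there is no in-paper proof to compare against. Evaluating your argument on its own terms: the continuity half is correct and clean (the coupling argument in fact gives the stronger statement that $\ed_{\hh}$ is $1$-Lipschitz), modulo the reliance on the B--M equality $\ed_{\hh}(p)=\lim_{n}\mathbb{E}[\dist(G(n,p),\hh)]/\binom{n}{2}$, which is a separate theorem of the same source and so presumably available.

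The concavity half, however, has a fatal gap. The identity $\lim_{n\to\infty}\phi_n(p)=\ed_{\hh}(p)$ that you flag as the ``main obstacle'' is simply false, and no amount of concentration will rescue it. Since $\mathbb{E}[\dist(G(n,p),H)]=|E(H)|(1-p)+\bigl(\binom{n}{2}-|E(H)|\bigr)p$ depends on $H$ only through $|E(H)|$, your $\phi_n(p)$ degenerates to a function of the extremal edge counts of $n$-vertex members of $\hh$. Take $\hh=\forb(K_3)$: the empty graph is triangle-free, so for $p\le 1/2$ one gets $\phi_n(p)=p$, whereas $\ed_{\forb(K_3)}(p)=p/2$; at $p=1/2$ your candidate limit is $1/2$ while the true value is $1/4$. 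The discrepancy is precisely the adaptivity you are trying to discard: $\dist(G,\hh)=\min_{H}\dist(G,H)$ lets $H$ depend on the realization $G$, and
\[
\min_{H\in\hh}\mathbb{E}\bigl[\dist(G(n,p),H)\bigr]\;\gg\;\mathbb{E}\Bigl[\min_{H\in\hh}\dist(G(n,p),H)\Bigr]
\]
in general. Your proposed ``universal near-optimal target $H^\ast$'' cannot exist: in the $\forb(K_3)$ example any fixed triangle-free $H^\ast$ on $[n]$ satisfies $\mathbb{E}[\dist(G(n,1/2),H^\ast)]=\binom{n}{2}/2$ exactly, while $\dist(G(n,1/2),\forb(K_3))\approx\binom{n}{2}/4$ w.h.p.\ --- McDiarmid concentration of the latter quantity only sharpens the contradiction. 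What Alon--Stav actually provide is that a random graph at a single density $p^\ast$ is asymptotically \emph{extremal} for the max over input graphs, not that a single target graph is near-optimal over random inputs. To prove concavity along these lines you would need a mechanism that respects the adaptivity, e.g.\ working with the $\max_G$ formulation directly and interpolating between extremal graphs of densities $p_1$ and $p_2$, or passing through the CRG representation $\ed_{\hh}(p)=\inf_K g_K(p)$, where each $g_K$ is concave and the pointwise infimum stays concave.
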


By the proposition above, the function $\ed_{\hh}$ achieves its maximum in $[0,1]$. We denote this maximum value by $d_{\hh}^*$, and the the set of all values of $p$ for which the maximum is achieved by $p_{\hh}^*$.

A \emph{colored regularity graph (CRG)}, $K$, is a complete graph with a partition of the vertices into white $\vwk$ and black $\vbk$, and a partition of the edges into white $\ewk$, gray $\egk$, and black $\ebk$. We say that a graph $H$ \emph{embeds in} $K$, denoted $H \mapsto K$, if there is a function $\varphi : V(H)\rightarrow\vk$ so that if $h_1 h_2 \in E(H)$, then either $\varphi(h_1)=\varphi(h_2)\in \vbk$ or $\varphi(h_1)\varphi(h_2)\in \ebk\cup \egk$, and if $h_1 h_2 \notin E(H)$, then either $\varphi(h_1)=\varphi(h_2)\in \vwk$ or $\varphi(h_1)\varphi(h_2)\in \ewk\cup \egk$.

Given a hereditary property $\hh$, it is easy to see that it can be expressed as $\hh=\bigcap\lbrace\forb(H): H\in \F(\hh)\rbrace$ for some family of graphs $\F(\hh)$. We denote $\K(\hh)$ to be the subset of CRGs such that no forbidden graph embeds into them, i.e. $\K(\hh)=\lbrace K : H \not\mapsto K, \forall H\in \F(\hh)\rbrace$. In our case, $\K(\hh)=\lbrace K : H \not\mapsto K\rbrace$ for $\hh=\forb(H)$ and so a CRG $K$ is \emph{a sub-CRG} of $\tk$ if $K$ can be obtained by deleting vertices of $\tk$.

For every CRG $K$ we associate a function $g$ on $[0,1]$ defined by

\begin{equation}
\label{quad_prog}
g_K(p)=\min \lbrace \x^T\mkp\x : \x^T\one=1, \x\geq \textbf{0} \rbrace,
\end{equation}
where

\begin{equation}
   [\mkp]_{ij}=\left\{\begin{array}{ll}
                         p, & \mbox{if $v_iv_j\in\ewk$ or $v_i=v_j\in\vwk$;} \\
                         1-p, & \mbox{if $v_iv_j\in\ebk$ or $v_i=v_j\in\vbk$;} \\
                         0, & \mbox{if $v_iv_j\in\egk$.}
                      \end{array}\right.
\end{equation}

The $g$ function of CRGs can be used to compute the edit distance function. Balogh and Martin~\cite{B-M} proved that $\ed_{\hh}(p)=\underset{K\in\K(\hh)}{\inf} g_K(p)$ and Marchant and Thomason~\cite{M-T} further proved that the infimum is achieved by some $K$, i.e. $\ed_{\hh}(p)=\underset{K\in\K(\hh)}{\min} g_K(p)$. So, for every $p\in[0,1]$, there is a CRG $K\in\K(\hh)$ such that $\ed_{\hh}(p)=g_K(p)$. It is also shown in~\cite{M-T} that in order to find such CRG we only need to look at so called $p$-core CRGs. A CRG $\tk$ is \emph{$p$-core} if $g_{\tk}(p)<g_{K}(p)$ for every sub-CRG $K$ of $\tk$.

The CRG with $r$ white vertices, $s$ black vertices and all edges gray is denoted $K(r,s)$. The \emph{clique spectrum} of the hereditary property $\hh=\forb(H)$, denoted $\Gamma(\hh)$, is the set of all pairs $(r,s)$ such that $H \not\mapsto K(r,s)$. It is easy to see that, for any hereditary property $\hh$ its clique spectrum $\Gamma=\Gamma(\hh)$ can be expressed as a Ferrers diagram. That is, if $r\geq 1$ and $(r,s)\in\Gamma$, then $(r-1,s)\in\Gamma$ and if $s\geq 1$ and $(r,s)\in\Gamma$, then $(r,s-1)\in\Gamma$.  An \emph{extreme point} of a clique spectrum $\Gamma$ is a pair $(r,s)\in \Gamma$ such that $(r+1,s)$ and $(r,s+1)$ do not belong to $\Gamma$. The set of all extreme points of $\Gamma$ is denoted by $\Gamma^*$.

Define the function $\gamma_{\hh}(p)=\min \lbrace g_{K(r,s)}(p) : (r,s)\in \Gamma(\hh) \rbrace$. Clearly, $\ed_{\hh}(p)\leq \gamma_{\hh}(p)$. Moreover, one only need consider the extreme points rather than whole $\Gamma$ itself, that is, $\gamma_{\hh}(p)=\min \lbrace g_{K(r,s)}(p) : (r,s)\in \Gamma^*(\hh) \rbrace$.

In this paper, the hereditary properties we consider are of the form $\hh=\forb(C_h^t)$. Since Theorem 20 in~\cite{M_survey} gives $\ed_{\forb(K_h)}(p)=p/(h-1)$, we will assume that $h\geq 2t+2$. For convenience, we denote $\ell_a=\left\lceil\frac{h}{t+a+1}\right\rceil$, for $a\in\lbrace 0,\ldots,t\rbrace$. We also denote $p_0={\ell_t}^{-1}$. The motivation for these values will be discussed in Section \ref{sec:proof_main}.

The main results of this paper are Theorems \ref{Thm:established_gamma} and \ref{Thm:main}.

\begin{theorem} \label{Thm:established_gamma}
   Let $t\geq 1$ and  $h\geq\max\{t(t+1),4\}$ be integers.  For all $p\in [0,1]$,
   \begin{align*}
      \gamma_{\hh}(p) &= \min_{a\in\{0,1,\ldots,t\}} \left\{\frac{p(1-p)}{a(1-p)+\left(\ell_a-1\right)p} \right\}, & \mbox{ if $(t+1)\mid h$;} \\
      \gamma_{\hh}(p) &= \min_{a\in\{0,1,\ldots,t\}} \left\{\frac{p}{t+1},\frac{p(1-p)}{a(1-p)+\left(\ell_a-1\right)p}\right\}, & \mbox{ if $(t+1)\mid\!\!\!\not\;\; h$.}
   \end{align*}
\end{theorem}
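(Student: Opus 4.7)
The plan is to identify the extreme points of $\Gamma=\Gamma(\hh)$ by matching upper and lower bounds on membership in $\Gamma$, and then invoke the standard formula $g_{K(r,s)}(p)=p(1-p)/(r(1-p)+sp)$ for all-gray CRGs (a Lagrangian minimization of \eqref{quad_prog}).

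For the upper bound $C_h^t\mapsto K(a,\ell_a)$, for each $a\in\{0,\ldots,t\}$, I would use an explicit chunking: divide $V(C_h)$ into $\ell_a$ cyclically consecutive arcs each of length at most $t+a+1$ (possible since $\ell_a=\lceil h/(t+a+1)\rceil$), map the first $\min(t+1,\text{length})$ vertices of each arc to a distinct black vertex of $K(a,\ell_a)$, and assign the remaining ``tail'' vertices to the $a$ white vertices by position within the arc. The $i$-th tail vertices of consecutive tail-carrying arcs are at cyclic distance at least $t+1+i>t$, so each white preimage is independent in $C_h^t$. When $(t+1)\nmid h$, I would also verify $C_h^t\mapsto K(t+1,1)$ (use one consecutive clique of size $t+1$ and $(t+1)$-color the remaining path $P^t_{h-t-1}$) and $C_h^t\mapsto K(t+2,0)$ via the standard bound $\chi(C_h^t)\leq t+2$, valid for $h\geq t(t+1)$.

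The main technical step is the matching lower bound $(a,\ell_a-1)\in\Gamma$ for $a\in\{0,\ldots,t\}$. Assume for contradiction a partition $V(C_h^t)=I_1\sqcup\cdots\sqcup I_a\sqcup K_1\sqcup\cdots\sqcup K_{\ell_a-1}$, and set $W=\bigcup_i I_i$, $B=\bigcup_j K_j$. Since each clique is a consecutive arc of at most $t+1$ vertices, $B$ forms at most $m\leq\ell_a-1$ maximal black runs on $C_h$, so $W$ is the disjoint union of $m$ ``gap'' arcs $B_1,\ldots,B_m$. The induced subgraph $C_h^t[B_i]$ equals $P^t_{|B_i|}$, of chromatic number $\min(|B_i|,t+1)$. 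Since $W$ is properly colored by $a\leq t<t+1$ independent-set classes, each $|B_i|\leq a$. Summing gives $|W|\leq ma\leq(\ell_a-1)a$ while $|B|\leq(\ell_a-1)(t+1)$, yielding $h\leq(\ell_a-1)(t+a+1)$ and contradicting $\ell_a=\lceil h/(t+a+1)\rceil$. The remaining case $(t+1,0)\in\Gamma$ when $(t+1)\nmid h$ follows from $\chi(C_h^t)\geq t+2$ in that regime.

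Combining these, every extreme point of $\Gamma$ is of the form $(a,\ell_a-1)$ with $a\in\{0,\ldots,t\}$ or is $(t+1,0)$ (only when $(t+1)\nmid h$), so $\gamma_{\hh}(p)=\min_{(r,s)\in\Gamma^*}g_{K(r,s)}(p)$ reduces to the two stated cases. The hardest step is the chromatic-gap lower bound: one might be tempted to first normalize the partition (e.g.\ forcing every clique to have size exactly $t+1$ or eliminating adjacent clique blocks), but those moves are delicate on the cycle. The clean route is to invoke the induced-subgraph chromatic number gap-by-gap, which remains valid regardless of how small individual cliques are or whether neighboring cliques happen to be adjacent.
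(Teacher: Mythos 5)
Your argument is correct and follows essentially the same route as the paper: establish $(a,\ell_a-1)\in\Gamma$ and $(a,\ell_a)\notin\Gamma$ for $a\in\{0,\ldots,t\}$ via consecutive-arc partitions, handle the column $r=t+1$ via the chromatic number of $C_h^t$, and then apply $g_{K(r,s)}(p)=p(1-p)/(r(1-p)+sp)$ over the extreme points. The one step you state without justification is that the cliques in a partition of $V(C_h^t)$ may be taken to be consecutive arcs of $C_h$; the paper supplies the needed one-line observation that any vertex lying between two members of a clique in $C_h^t$ is adjacent to every member of that clique and can therefore be absorbed into it.
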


\noindent \textbf{Note:} If $a=0$, then $\frac{p(1-p)}{a(1-p)+\left( \ell_a -1 \right) p}=\frac{p(1-p)}{\left( \ell_0 -1 \right) p}$, which we define to be $\frac{1-p}{ \ell_0 -1 }$ at $p=0$.

\begin{theorem} \label{Thm:main}
   Let $t\geq 1$ and $h\geq 2t(t+1)+1$ be positive integers and let $\hh=\forb(C_h^t)$.

   If $(t+1)\mid\!\!\!\not\;\; h$, then for $0\leq p\leq 1$, and if $(t+1)\vert h$ then for $p_0\leq p\leq 1$, we have that
   \begin{equation}
      \ed_{\hh}(p)=\gamma_{\hh}(p).
   \end{equation}
\end{theorem}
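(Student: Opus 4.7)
The plan is to establish the matching lower bound $\ed_{\hh}(p) \geq \gamma_{\hh}(p)$, since the reverse inequality is immediate: each extreme $K(r,s)$ with $(r,s)\in \Gamma(\hh)$ belongs to $\K(\hh)$, so $\ed_{\hh}(p)\le g_{K(r,s)}(p)$, and the note after Theorem \ref{Thm:established_gamma} identifies these $g_{K(r,s)}$ with the expressions $\frac{p(1-p)}{a(1-p)+(\ell_a-1)p}$ (together with $p/(t+1)$ coming from $K(0,t+1)$ in the non-divisible case). By the Marchant--Thomason result that $\ed_{\hh}(p)=\min_K g_K(p)$ with the minimum attained by a $p$-core CRG, it suffices to prove that every $p$-core $K\in\K(\hh)$ satisfies $g_K(p)\ge \gamma_{\hh}(p)$.

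First, I would recall the structural properties of $p$-core CRGs proved by Marchant and Thomason: in a $p$-core $K$, every vertex receives positive weight in the optimal $\x$ for $g_K(p)$, non-gray edges must be ``oriented correctly'' against gray edges, and there are sharp bounds on how many white edges can be incident to a vertex. Using these, I expect to reduce to the case of an \emph{all-gray} $p$-core with $r$ white and $s$ black vertices, whose value is exactly $g_{K(r,s)}(p)$; any non-gray edge in a $p$-core $K$ can be shown to create enough extra room that $C_h^t$ embeds in $K$, so such $K$ lies outside $\K(\hh)$.

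The core embedding claim is the following: if $K=K(r,s)$ and $(r,s)\notin \Gamma(\hh)$, then $C_h^t\mapsto K$. To see why the extreme points take the stated form, partition the vertices of $C_h$ into consecutive arcs: into each white vertex of $K$ one can map up to $t+1$ consecutive vertices of $C_h$ (they form a clique in $C_h^t$ and the embedding rules force such a block to land on a single white vertex via gray/black edges among them), while each black vertex can absorb a single vertex separating two blocks. This yields $C_h^t\mapsto K(r,s)$ precisely when $r(t+1)+s\ge h$ with an additional parity/divisibility condition, giving the extreme points $(r,s)=(\ell_a-1,a)$ for $a=0,\ldots,t$, and the extra point $(0,t+1)$ in the non-divisible case. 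The hypothesis $h\ge 2t(t+1)+1$ guarantees that the arcs are long enough for these mappings to go through without forbidden short cycles in $C_h^t$ interfering, and also handles boundary effects where $\ell_a$ is close to its lower bound.

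The main obstacle is the embedding side of the argument for \emph{arbitrary} $p$-core CRGs, not just $K(r,s)$: I must show that any black edge, white edge, or mixed configuration in a $p$-core $K\in \K(\hh)$ can be parlayed into an embedding of $C_h^t$, by using the cyclic/circular nature of $C_h^t$ to wrap blocks of $t+1$ consecutive vertices onto white vertices and transitions onto black vertices or gray edges. This is a case analysis that is sensitive to divisibility of $h$ by $t+1$; when $(t+1)\mid h$ and $p<p_0$, the CRG $K(0,t+1)$ does not actually lie in $\K(\hh)$ (since $C_h^t$ embeds into it), which explains the exclusion of small $p$ in that case, and why extending the theorem there would require a genuinely different argument rather than a refinement of the present one.
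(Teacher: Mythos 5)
Your reduction step is the fatal gap. You assert that in a $p$-core $K\in\K(\hh)$ ``any non-gray edge can be shown to create enough extra room that $C_h^t$ embeds in $K$,'' hence that one may assume $K$ is all-gray, i.e.\ $K=K(r,s)$. This is false. For $p<1/2$ Proposition~\ref{Prop:pcore} only forbids black edges and white edges at white vertices; white edges between \emph{black} vertices are perfectly compatible with $p$-coreness, and such $K$ can lie in $\K(\hh)$ with $g_K(p)$ strictly below anything achievable by all-gray CRGs on the same vertex counts. The whole difficulty of Theorem~\ref{Thm:main} is exactly to bound $g_K(p)$ for these non-gray $p$-core CRGs, and the paper does so not by eliminating them but by analysing the derived graph $F$ whose vertices are the black vertices of $K$ and whose edges are the gray edges. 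The key inputs are: (i) Lemma~\ref{lem:cycle_length_short}, which forbids $F$ from having any gray cycle with length in $\{\lceil h/(t+a+1)\rceil,\ldots,\lfloor h/t\rfloor\}$ when $K$ has $a$ white vertices; (ii) Lemma~\ref{lem:degrees}, which converts the assumed upper bound on $g_K(p)$ into lower bounds on the minimum gray degree and codegree of $F$ via Propositions~\ref{Prop:M_sym} and~\ref{Prop:x_bd}; (iii) a Dirac/Ore-style longest-path argument (Lemmas~\ref{lem:cycle_length_long} and~\ref{Lem:gap}, Proposition~\ref{prop:Hamiltonian}) that forces $F$ to be Hamiltonian or produces a contradiction, thereby bounding $|V(K)|$. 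None of this appears in your proposal, and it cannot be replaced by an embedding-into-$K(r,s)$ argument.

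Two further slips: your description of the embedding has cliques of $C_h^t$ landing on \emph{white} vertices and blocks of $t+1$ consecutive vertices absorbed by white vertices --- this is reversed. Consecutive vertices of $C_h^t$ form a clique and therefore (under the embedding rules of Section~\ref{sec:defns}) may be collapsed to a single \emph{black} vertex, while an independent set collapses to a single white vertex. Relatedly, the value $p/(t+1)$ comes from $K(t+1,0)$ (all white, all gray), not from $K(0,t+1)$. Finally, your proposal does not explain how the theorem is obtained for $p\in[1/2,1]$ or for $p$ near $0$; the paper handles both regimes by showing $\gamma_{\hh}$ is linear on those intervals and invoking continuity and concavity of $\ed_{\hh}$ (Proposition~\ref{prop:basic}) together with the equality at the endpoints, which is an essential, separate ingredient that cannot be dispensed with.
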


\begin{corollary}\label{cor:cycles}
   Let $h\geq 5$ be a positive integer and $\hh=\forb(C_h)$.

   \begin{itemize}
      \item If $h$ is even, then for ${\lceil h/3\rceil}^{-1}\leq p\leq 1$,
      \begin{equation*}
         \ed_{\hh}(p)=\min \left\lbrace  \frac{p(1-p)}{1-p+(\lceil h/3\rceil-1)p}, \frac{1-p}{\lceil h/2\rceil-1}\right\rbrace .
      \end{equation*}

      \item If $h$ is odd, then for $0\leq p\leq 1$,
      \begin{equation*}
         \ed_{\hh}(p)=\min \left\lbrace \frac{p}{2}, \frac{p(1-p)}{1-p+(\lceil h/3\rceil-1)p}, \frac{1-p}{\lceil h/2\rceil-1}\right\rbrace .
      \end{equation*}
   \end{itemize}
\end{corollary}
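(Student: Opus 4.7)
The plan is to obtain Corollary~\ref{cor:cycles} as a direct specialization of Theorem~\ref{Thm:main} (together with the formula for $\gamma_{\hh}$ in Theorem~\ref{Thm:established_gamma}) to the case $t = 1$, using the identification $C_h = C_h^1$. The hypothesis $h \geq 2t(t+1)+1$ becomes $h \geq 5$, which matches the corollary's assumption exactly, and the divisibility condition $(t+1)\mid h$ becomes the parity condition $2\mid h$.

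First I would compute the parameters from Theorem~\ref{Thm:established_gamma} at $t=1$. Since $\ell_a = \lceil h/(t+a+1)\rceil = \lceil h/(a+2)\rceil$ for $a\in\{0,1\}$, we get $\ell_0 = \lceil h/2\rceil$, $\ell_1 = \lceil h/3\rceil$, and $p_0 = \ell_1^{-1} = \lceil h/3\rceil^{-1}$. Substituting into the general expression $\frac{p(1-p)}{a(1-p)+(\ell_a-1)p}$, the $a=0$ term reduces to $\frac{1-p}{\lceil h/2\rceil - 1}$ (using the convention stated in the note after Theorem~\ref{Thm:established_gamma} to handle $p=0$), while the $a=1$ term becomes $\frac{p(1-p)}{(1-p)+(\lceil h/3\rceil - 1)p}$. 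The extra term $\frac{p}{t+1} = \frac{p}{2}$ is present in $\gamma_{\hh}(p)$ precisely when $(t+1)\nmid h$, i.e.\ when $h$ is odd.

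Next I would split on the parity of $h$ and invoke Theorem~\ref{Thm:main}. If $h$ is odd with $h\geq 5$, then $2 \nmid h$, so Theorem~\ref{Thm:main} yields $\ed_{\hh}(p) = \gamma_{\hh}(p)$ for all $p\in[0,1]$, producing the three-term minimum in the odd clause of the corollary. If $h$ is even (so $h\geq 6$), then $2\mid h$, and Theorem~\ref{Thm:main} yields $\ed_{\hh}(p) = \gamma_{\hh}(p)$ for $\lceil h/3\rceil^{-1}\leq p\leq 1$; this range coincides with the range stated in the corollary, and on it $\gamma_{\hh}(p)$ is exactly the two-term minimum given there.

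There is no real obstacle: the corollary is a pure bookkeeping consequence of the main theorems once $t=1$ is plugged in, with the only minor point being to verify that the case $h = 5$ is automatically covered (it is odd, and $5 \geq 2\cdot 1\cdot 2 + 1$) and that the even case starts at $h = 6$, which also satisfies the hypothesis of Theorem~\ref{Thm:main}.
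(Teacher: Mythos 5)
Your proposal is correct and is precisely the intended argument: the paper gives no separate proof of Corollary~\ref{cor:cycles}, and it is exactly the $t=1$ specialization of Theorems~\ref{Thm:established_gamma} and~\ref{Thm:main}, with $\ell_0=\lceil h/2\rceil$, $\ell_1=\lceil h/3\rceil$, $p_0=\lceil h/3\rceil^{-1}$, and the $a=0$ term collapsing to $\frac{1-p}{\lceil h/2\rceil-1}$ as you note.
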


It was shown in \cite{M_sym} and in \cite{M-T}, respectively, that
\begin{align*}
   \ed_{\forb(C_3)}(p)=p/2 \qquad\mbox{and}\qquad \ed_{\forb(C_4)}(p)=p(1-p) .
\end{align*}

\noindent It follows from this and the above corollary that when $t=1$, the furthest graph from $\forb(C_h)$ is a graph which has density $p^*=1/(\lceil h/2 \rceil - \lceil h/3 \rceil +1)$ when $h\geq 4$ and $h\not\in \lbrace 4,7,8,10,16\rbrace$, and has density $p^*=1/(1+\sqrt{\lceil h/3 \rceil -1})$ when $h\in \lbrace 4,7,8,10,16 \rbrace$. Also, observe that the maximum value of the edit distance function can be an irrational number.

Our proof techniques often require us to compare the $g$ function of a CRG to one of the individual functions that are given in Theorem~\ref{Thm:established_gamma}. However, when $h$ is large enough at most 3 of these functions are necessary to define $\gamma_{\hh}$.

\begin{corollary}\label{cor:established_gamma}
   Let $t\geq 2$ and $h\geq 4t^2+10t+24$ be positive integers. Recall $\ell_t=\left\lceil\frac{h}{2t+1}\right\rceil$ and $p_0=\ell_t^{-1}$. Then
   \begin{itemize}
      \item If $(t+1)\mid h$, then for $p_0\leq p\leq 1$,
      \begin{equation*}
         \ed_{\hh}(p)=\min\left\{\frac{p(1-p)}{t(1-p)+(\ell_t-1)p}, \frac{1-p}{\ell_0-1}\right\} .
      \end{equation*}

      \item If $(t+1)\mid\!\!\!\not\;\; h$, then for $0\leq p\leq 1$,
      \begin{equation*}
         \ed_{\hh}(p)=\min\left\{\frac{p}{t+1}, \frac{p(1-p)}{t(1-p)+(\ell_t-1)p}, \frac{1-p}{\ell_0-1}\right\} .
      \end{equation*}
   \end{itemize}
\end{corollary}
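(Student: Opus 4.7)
The plan is to show that under the hypothesis $h\geq 4t^2+10t+24$, every ``middle'' term $f_c$ with $c\in\{1,\ldots,t-1\}$ is pointwise redundant in the formula of Theorem~\ref{Thm:established_gamma}, where I abbreviate $f_a(p):=p(1-p)/[a(1-p)+(\ell_a-1)p]$ (with $f_0(p)=(1-p)/(\ell_0-1)$). Since $4t^2+10t+24>2t(t+1)$ for $t\geq 2$, Theorem~\ref{Thm:main} yields $\ed_{\hh}(p)=\gamma_{\hh}(p)$ on the stated range, so it suffices to prove the pointwise domination $f_c(p)\geq\min\{f_0(p),f_t(p)\}$ for every $c\in\{1,\ldots,t-1\}$ and every $p\in[0,1]$; the $p/(t+1)$ term in the non-divisible case simply carries over from Theorem~\ref{Thm:established_gamma}.

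The key observation is that reciprocals decouple the $a$-dependence: for $a\geq 1$,
\begin{equation*}
\frac{1}{f_a(p)}=\frac{a}{p}+\frac{\ell_a-1}{1-p}.
\end{equation*}
Taking the convex combination with weight $\lambda=(t-c)/t$, the $a/p$ contributions sum to exactly $c/p$, and a direct computation gives
\begin{equation*}
\frac{t-c}{t}\cdot\frac{1}{f_0(p)}+\frac{c}{t}\cdot\frac{1}{f_t(p)}-\frac{1}{f_c(p)}=\frac{(t-c)\ell_0+c\ell_t-t\ell_c}{t(1-p)}.
\end{equation*}
If this quantity is nonnegative, then $\max\{1/f_0(p),1/f_t(p)\}\geq 1/f_c(p)$, equivalently $\min\{f_0(p),f_t(p)\}\leq f_c(p)$, which is the required domination. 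So the task reduces to the purely arithmetic inequality $(t-c)\ell_0+c\ell_t\geq t\ell_c$ for $c\in\{1,\ldots,t-1\}$.

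To verify this, I would use $\ell_a\geq h/(t+a+1)$ (so there is no ceiling penalty on $\ell_0$ or $\ell_t$) together with $\ell_c\leq h/(t+c+1)+1$, and the algebraic identity
\begin{equation*}
\frac{t-c}{t+1}+\frac{c}{2t+1}-\frac{t}{t+c+1}=\frac{tc(t-c)}{(t+1)(2t+1)(t+c+1)},
\end{equation*}
which reduces the inequality to $c(t-c)\,h\geq (t+1)(2t+1)(t+c+1)$. The worst case over $c\in\{1,\ldots,t-1\}$ is $c=t-1$, demanding $h\geq 2t(t+1)(2t+1)/(t-1)$; the elementary identity $(t-1)(4t^2+10t+24)-2t(t+1)(2t+1)=12(t-2)$ confirms that the hypothesis is enough for all $t\geq 2$. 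The main obstacle is spotting the correct convex combination---equivalently, recognizing the affine structure of $1/f_a$ in $a$---after which everything reduces to routine manipulation of ceilings and a polynomial comparison in $t$.
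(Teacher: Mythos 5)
Your proof is correct and takes essentially the same approach as the paper's: both reduce the redundancy of the middle terms $f_c$, $c\in\{1,\ldots,t-1\}$, to the arithmetic inequality $(t-c)\ell_0+c\ell_t\geq t\ell_c$, and establish it with identical bounds ($\ell_0,\ell_t\geq h/(\cdot)$, $\ell_c< h/(t+c+1)+1$), the same algebraic identity, and the same worst-case analysis at $c=t-1$. The only cosmetic difference is the route to that inequality --- you exploit the affine structure of $1/f_a$ in $(a,\ell_a)$ via a convex combination of $1/f_0$ and $1/f_t$, whereas the paper exhibits two crossover thresholds in $p$ (one past which $f_c\geq f_0$, one before which $f_c\geq f_t$) and shows they overlap; these are equivalent formulations of the same comparison.
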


The rest of the paper is organized as follows: Section~\ref{sec:defns} provides some definitions and basic results, Section~\ref{sec:comp_gamma} gives the proof of Theorem~\ref{Thm:established_gamma}, Section~\ref{sec:forbidden_cycles} gives Lemma~\ref{lem:cycle_length_short} which is the key lemma for the proof of Theorem~\ref{Thm:main}, Section~\ref{sec:proof_main} gives the proof of Theorem~\ref{Thm:main}, Section~\ref{sec:proofs} gives the proofs of some helpful lemmas and facts, and Section~\ref{sec:divisibility_p_small} gives some concluding remarks.

\section{Definitions and Tools}
\label{sec:defns}

All graphs considered in this paper are simple. For standard graph theory notation please see \cite{West}, for the edit distance notation please see \cite{M_survey}. A sub-CRG $K'$ of a CRG $K$ is a \emph{component} if it is maximal with respect to the property that, for all $v,w\in V(K')$, there exists a path consisting of white and black edges entirely within $K'$. It is easy to compute the $g$ function of a CRG given the $g$ function of its components:
\begin{proposition}[\cite{M_sym}]
\label{Prop:components}
Let $K$ be a CRG with components $K^{(1)},\ldots, K^{(r)}$ and $p\in [0,1]$. Then $\left( g_K(p)\right) ^{-1}=\sum_{i=1}^{r}\left( g_{K^{(i)}}(p)\right)^{-1}$.
\end{proposition}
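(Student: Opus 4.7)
The plan is to exploit the block structure of $\mkp$ that arises because, by definition of a component, every edge joining a vertex of one component to a vertex of another must be gray. Since gray edges contribute $0$ to the matrix $\mkp$, partitioning $\x$ as $(\x^{(1)},\ldots,\x^{(r)})$ according to the components of $K$ yields the decomposition
\begin{equation*}
   \x^T\mkp\x \;=\; \sum_{i=1}^{r} (\x^{(i)})^T \mathbf{M}_{K^{(i)}}(p)\, \x^{(i)}.
\end{equation*}
Let $s_i = (\x^{(i)})^T \one$, so that $s_i \geq 0$ and $\sum_i s_i = 1$. If $s_i > 0$, the rescaled vector $\mathbf{y}^{(i)} = \x^{(i)}/s_i$ is feasible for $g_{K^{(i)}}(p)$, whence $(\x^{(i)})^T \mathbf{M}_{K^{(i)}}(p)\, \x^{(i)} \geq s_i^2\, g_{K^{(i)}}(p)$; the reverse inequality is attained by taking each $\mathbf{y}^{(i)}$ to be an optimizer for its component. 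Consequently, computing $g_K(p)$ reduces to the finite-dimensional problem
\begin{equation*}
   g_K(p) \;=\; \min\left\{\sum_{i=1}^{r} s_i^2\, g_{K^{(i)}}(p) \;:\; s_i \geq 0,\ \sum_{i=1}^{r} s_i = 1\right\}.
\end{equation*}

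The remaining step is a short one-variable optimization. Writing $g_i = g_{K^{(i)}}(p)$ and assuming for now that each $g_i > 0$, Lagrange multipliers (equivalently, the Cauchy--Schwarz inequality applied with weights $\sqrt{g_i}$ and $1/\sqrt{g_i}$) give the optimal allocation $s_i = g_i^{-1}\big/\sum_j g_j^{-1}$, with minimum value $\bigl(\sum_i g_i^{-1}\bigr)^{-1}$. Taking reciprocals yields the claimed identity $g_K(p)^{-1} = \sum_{i=1}^{r} g_{K^{(i)}}(p)^{-1}$.

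The only obstacle is a mild boundary issue: some $g_i$ might vanish, which can occur only when $p\in\{0,1\}$ (for example, a single white vertex has $g$-value $p$, which is $0$ at $p=0$). In that regime one adopts the convention $1/0 = +\infty$, and the optimization is resolved by concentrating all mass on such a component, making both sides of the identity equal to $0$. Thus the formula holds throughout $[0,1]$.
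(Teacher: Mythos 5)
Your proof is correct. Note that the paper simply cites \cite{M_sym} for this proposition and supplies no proof of its own, so there is nothing in the text to compare against; nevertheless your argument—block-diagonality of $\mkp$ across components because inter-component edges are gray, reduction to the convex minimization of $\sum_i s_i^2 g_{K^{(i)}}(p)$ over the simplex, and the Cauchy--Schwarz/Lagrange resolution giving $s_i\propto g_{K^{(i)}}(p)^{-1}$—is the standard route and handles the degenerate endpoint cases $g_{K^{(i)}}(p)=0$ correctly.
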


\noindent Note that by Proposition~\ref{Prop:components}, $g_{K(r,s)}(p)=\left( \frac{r}{p}+\frac{s}{1-p} \right)^{-1}$.

Let $K$ be a CRG, $v\in\vk$, and let $\x$ be an optimal solution to the quadratic program~(\ref{quad_prog}). The \emph{weight} of $v$, denoted $\x(v)$, is the entry corresponding to $v$ of the vector $\x$. We say that $w\in\vk$ is a \emph{gray neighbor} of $v\in\vk$ if $w$ is adjacent to $v$ via a gray edge. White and black neighbors are defined analogously. The set of all gray neighbors of $v$ is denoted by $N_G(v)$ and the number of vertices adjacent to $v$ via gray edges is denoted by $\deg_G(v)$, i.e. $\deg_G(v)=|N_G(v)|$.

In contrast, the \emph{gray degree} of $v$, denoted $d_G(v)$, is the sum of the weights of gray neighbors of $v$, i.e. $d_G(v)=\sum \lbrace \x(w): w\in N_G(v) \rbrace$. Similarly, the \emph{white degree} of $v$, denoted $d_W(v)$, is the sum of the weights of the white neighbors of $v$ plus the weight of $v$ if and only if it is a white vertex. The \emph{black degree} of $v$, denoted $d_B(v)$, is the sum of the weights of the black neighbors of $v$ plus the weight of $v$ if and only if it is a black vertex. So, $d_G(v)+d_W(v)+d_B(v)=1$ for all $v\in\vk$.

The number of common gray neighbors of vertices $v$ and $w$ is denoted by $\deg_G(v,w)$. The \emph{gray codegree} of vertices $v$ and $w$, denoted $d_G(v,w)$, is the sum of the weights of the common gray neighbors of $v$ and $w$.  For a set of vertices $\lbrace v_1,v_2,\ldots,v_{\ell}\rbrace$, we say $v_1v_2\cdots v_{\ell}$ is a \emph{gray path} if $v_iv_{i+1}\in \egk$ for $i=1,\ldots, \ell-1$. Analogously, we say $v_1v_2\cdots v_{\ell} v_1$ is a \emph{gray cycle} if $v_1v_{\ell}\in \egk$ and $v_iv_{i+1}\in \egk$ for $i=1,\ldots, \ell-1$. Proposition~\ref{Prop:pcore} gives a structural classification of $p$-core CRGs.

\begin{proposition}[Marchant-Thomason~\cite{M-T}]\label{Prop:pcore}
Let $K$ be a $p$-core CRG.
\begin{itemize}
\item If $p=1/2$, then all of the edges of $K$ are gray.
\item If $p<1/2$, then $\ebk=\emptyset$ and there are no white edges incident to white vertices.
\item If $p>1/2$, then $\ewk=\emptyset$ and there are no black edges incident to black vertices.
\end{itemize}
\end{proposition}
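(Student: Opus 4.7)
The plan is to prove the contrapositive: suppose $K$ contains one of the forbidden configurations and construct a proper sub-CRG $K'$ with $g_{K'}(p)\leq g_K(p)$, contradicting the strict inequality in the definition of $p$-core. Let $\x$ be an optimal solution to the quadratic program~(\ref{quad_prog}) for $K$, with value $\lambda = g_K(p)$. If $\x(v)=0$ for any vertex $v$, then $K-v$ is already a proper sub-CRG of equal or smaller value; so we may assume $\x(v)>0$ for every $v$, and first-order stationarity then forces $(\mkp\x)_v=\lambda$ for all $v\in V(K)$.

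The next step is to analyze two local moves on $K$. The first is a \emph{vertex merge}: given vertices $u,v$, form $\x'$ on $V(K)\setminus\{v\}$ by $\x'(u)=\x(u)+\x(v)$, keeping other weights fixed. Direct expansion of $\x'^T\mathbf{M}_{K-v}(p)\x'$, using $(\mkp\x)_u=(\mkp\x)_v=\lambda$ to cancel the cross terms, gives
\begin{equation*}
\x'^T\mathbf{M}_{K-v}(p)\x' - \lambda \;=\; \x(v)^2\bigl([\mkp]_{uu}+[\mkp]_{vv}-2[\mkp]_{uv}\bigr),
\end{equation*}
so the merge produces a sub-CRG violating $p$-core whenever $2[\mkp]_{uv}\geq[\mkp]_{uu}+[\mkp]_{vv}$. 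The second move is \emph{delete-and-renormalize}: replace $\x$ by $\x/(1-\x(v))$ on $K-v$; the analogous calculation gives value $\leq\lambda$ precisely when $[\mkp]_{vv}\leq\lambda$.

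Each conclusion of the proposition then reduces to checking entries of $\mkp$, stratified by the sign of $1-2p$. For $p<1/2$ and a black edge $uv$, the entry $[\mkp]_{uv}=1-p$ dominates both diagonal entries (each at most $1-p$), so merging contradicts $p$-core; the same holds for a white edge between two white vertices at $p<1/2$. The $p>1/2$ statements follow from the white-black duality $K\leftrightarrow\overline{K}$, $p\leftrightarrow 1-p$, while at $p=1/2$ every non-gray edge and every colored vertex has $\mkp$-entry $1/2$, so any non-gray edge admits a zero-change merge. The main obstacle is the mixed-color white-edge case at $p<1/2$ (a white edge with one white and one black endpoint), where the quadratic correction is the strictly positive quantity $(1-2p)\x(v)^2$ and plain deletion fails because $[\mkp]_{vv}=1-p>\lambda$; handling this case requires exploiting the global stationarity system to force some $\x(v)=0$ and thereby reduce to the zero-weight case of the opening step.
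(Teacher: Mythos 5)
The paper does not prove this proposition; it is quoted from Marchant--Thomason~\cite{M-T}, so there is no in-paper argument to compare against. What follows is an assessment of your proposal on its own terms.

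Your framework is sound and the calculations you cite are correct. Using first-order stationarity $(\mkp\x)_w=g_K(p)$ on the support, the merge identity $\x'^T\mathbf{M}_{K-v}(p)\x'-g_K(p)=\x(v)^2\bigl([\mkp]_{uu}+[\mkp]_{vv}-2[\mkp]_{uv}\bigr)$ is right, as is the delete-and-renormalize criterion $[\mkp]_{vv}\leq g_K(p)$. These correctly dispatch black edges and white--white white edges at $p<1/2$ (where $2[\mkp]_{uv}\geq[\mkp]_{uu}+[\mkp]_{vv}$), all non-gray edges at $p=1/2$, and the $p>1/2$ assertions by the complementation symmetry $\mathbf{M}_{\overline{K}}(p)=\mathbf{M}_K(1-p)$.

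The gap is exactly where you flag it, and it is real. For $p<1/2$ and a white edge $uv$ with $u\in\vwk$, $v\in\vbk$, the merge correction is $(1-2p)\x(v)^2>0$ in \emph{both} directions ($u\to v$ and $v\to u$), and deletion of either endpoint fails since $g_K(p)<p<1-p$. Your proposed remedy--- ``exploiting the global stationarity system to force some $\x(v)=0$'' --- is stated as a hope rather than an argument, and it does not follow from stationarity alone. Writing the stationarity difference $(\mkp\x)_u-(\mkp\x)_v=0$ yields $(1-2p)\x(v)=\sum_{w\neq u,v}\bigl([\mkp]_{uw}-[\mkp]_{vw}\bigr)\x(w)$, which is satisfiable with $\x(v)>0$ whenever $u$ has other white neighbors; one can in fact check that the tangent-space Hessian $\mathbf{d}\mapsto\mathbf{d}^T\mkp\mathbf{d}$ on $\{\mathbf{d}:\mathbf{d}^T\one=0\}$ can be strictly positive definite in this configuration, so there is no zero-curvature merge direction to exploit. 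A genuinely new idea is needed to rule out a white edge between a white and a black vertex; until that case is closed, the proof is incomplete.
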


Proposition~\ref{Prop:M_sym} gives a formula for $d_G(v)$ for all $v\in\vk$ and Proposition~\ref{Prop:x_bd} uses this to give a bound on the weight of each $v$.

\begin{proposition}[\cite{M_sym}] \label{Prop:M_sym}
Let $p\in(0,1)$ and $K$ be a $p$-core CRG with optimal weight function $\x$.
\begin{itemize}
\item If $p\leq 1/2$, then $\x(v)=g_K(p)/(1-p)$ for all $v\in\vwk$, and
\begin{equation*}
d_G(v)=\frac{p-g_K(p)}{p}+\frac{1-2p}{p} \x(v), \bigskip \text{ for all } v\in \vbk.
\end{equation*}
\item If $p\geq 1/2$, then $\x(v)=g_K(p)/p$ for all $v\in\vbk$, and
\begin{equation*}
d_G(v)=\frac{1-p-g_K(p)}{1-p}+\frac{2p-1}{1-p} \x(v), \bigskip \text{ for all } v\in \vwk.
\end{equation*}
\end{itemize}
\end{proposition}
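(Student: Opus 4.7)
The plan is to derive both assertions as consequences of the KKT conditions for the quadratic program~(\ref{quad_prog}), combined with the structural information about $p$-core CRGs recorded in Proposition~\ref{Prop:pcore}.

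First I would argue that every vertex weight is strictly positive. Indeed, if $\x(v)=0$ for some $v\in V(K)$, then the restriction of $\x$ to $V(K)\setminus\{v\}$ would be feasible for the quadratic program associated with the sub-CRG $K'=K\setminus\{v\}$ and attain the value $g_K(p)$, forcing $g_{K'}(p)\leq g_K(p)$ and contradicting the fact that $K$ is $p$-core. Hence the box constraints $\x\geq 0$ are all inactive, and the only active constraint is $\x^T\one=1$. Stationarity of the Lagrangian then yields a scalar $\lambda$ with $\mkp\,\x=\lambda\one$, i.e.\ $(\mkp\,\x)_v=\lambda$ for every $v\in V(K)$. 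Multiplying on the left by $\x^T$ and using $\x^T\one=1$ gives $\lambda=\x^T\mkp\,\x=g_K(p)$, so $(\mkp\,\x)_v=g_K(p)$ for every $v$.

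Now suppose $p\leq 1/2$ and $v\in\vwk$. By Proposition~\ref{Prop:pcore}, $\ebk=\emptyset$ and no white edge is incident to a white vertex, so every non-diagonal entry of the $v$-th row of $\mkp$ corresponds either to a gray edge (contributing $0$) or is absent; only the diagonal self-loop (of weight $p$) survives. Writing out $(\mkp\,\x)_v=g_K(p)$ then collapses to a one-term identity that pins down $\x(v)$ in closed form. For $v\in\vbk$, the same proposition leaves only two kinds of nonzero contributions to $(\mkp\,\x)_v$: the diagonal (weight $1-p$) and white edges from $v$ to white vertices (weight $p$). This produces a linear relation between $\x(v)$ and $d_W(v)$, which I would then substitute into the identity $d_G(v)=1-d_B(v)-d_W(v)=1-\x(v)-d_W(v)$ (using that $v$ has no black neighbors, so $d_B(v)$ is exactly $\x(v)$ from the self-loop) and simplify to obtain the claimed expression for $d_G(v)$. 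The case $p\geq 1/2$ is handled by the symmetric argument, swapping white and black.

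The main obstacle is essentially careful bookkeeping: one needs to decompose $(\mkp\,\x)_v$ correctly into contributions from the self-loop and from edges of each color, and then use Proposition~\ref{Prop:pcore} to zero out the forbidden pieces. The boundary value $p=1/2$ poses no difficulty since all edges of a $p$-core CRG are then gray, the two subcases of Proposition~\ref{Prop:pcore} coincide, and both formulas degenerate consistently.
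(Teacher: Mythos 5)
Your approach is the right one and is essentially how the result is proved in the cited source~[M\_sym]: strict positivity of $\x$ follows from the $p$-core hypothesis (deleting a zero-weight vertex would give a sub-CRG whose $g$-value is no larger, contradicting the definition), complementary slackness then kills the box-constraint multipliers, and stationarity gives $\mkp\x=\lambda\one$ with $\lambda=\x^T\mkp\x=g_K(p)$. Your outline for $d_G(v)$ on black vertices is also correct: with $p\le 1/2$, Proposition~\ref{Prop:pcore} forces $d_B(v)=\x(v)$, so the $v$-th row reads $p\,d_W(v)+(1-p)\x(v)=g_K(p)$, and substituting $d_W(v)=1-d_G(v)-\x(v)$ gives exactly the stated expression.

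You stop short of the one computation that matters, however, asserting that the white-vertex row ``collapses to a one-term identity'' yielding ``the claimed expression'' without writing it down. Do write it down: for $p\le 1/2$ and $v\in\vwk$, Proposition~\ref{Prop:pcore} makes every edge at $v$ gray, so the only surviving term is the diagonal one, $p\,\x(v)=g_K(p)$, hence $\x(v)=g_K(p)/p$ --- \emph{not} $g_K(p)/(1-p)$ as printed in the statement. You can confirm this on the $p$-core CRG $K(1,1)$ with $p<1/2$: the optimum is $\x_w=1-p$, $\x_b=p$, $g_K(p)=p(1-p)$, so the white weight is $g_K(p)/p$, while $g_K(p)/(1-p)=p$ is the \emph{black} vertex's weight. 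The same transposition appears in the $p\ge 1/2$ clause, where the black-vertex weight should be $g_K(p)/(1-p)$. Note that Proposition~\ref{Prop:x_bd} and the $d_G$ formulas in the statement are consistent with your derivation; only the two vertex-weight equalities have $p$ and $1-p$ interchanged. Since your KKT derivation is correct, you should report the discrepancy (a transcription error in the statement) rather than claim to have recovered the printed formula.
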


\begin{proposition}[\cite{M_sym}]\label{Prop:x_bd}
Let $p\in(0,1)$ and $K$ be a $p$-core CRG with optimal weight function~$\x$.
\begin{itemize}
\item If $p\leq 1/2$, then $\x(v)\leq g_K(p)/(1-p)$ for all $v\in\vbk$.
\item If $p\geq 1/2$, then $\x(v)\leq g_K(p)/p$ for all $v\in\vwk$.
\end{itemize}
\end{proposition}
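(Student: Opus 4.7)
The plan is to derive both bounds from Proposition~\ref{Prop:M_sym} by combining its explicit formula for $d_G(v)$ with the elementary degree identity $d_G(v)+d_W(v)+d_B(v)=1$, together with the observation that a monochromatic vertex contributes its own weight to its same-colored degree. I would only sketch the case $p\leq 1/2$ in detail; the case $p\geq 1/2$ follows by the mirror-image calculation, swapping white/black and $p\leftrightarrow 1-p$.

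Fix $p\in(0,1/2]$ and a black vertex $v\in\vbk$. Because $v$ itself is black, the definitions recorded in Section~\ref{sec:defns} give $d_B(v)\geq \x(v)$, and $d_W(v)\geq 0$ is immediate, so the identity $d_G(v)+d_W(v)+d_B(v)=1$ yields the key inequality $d_G(v)\leq 1-\x(v)$. Now substitute the formula $d_G(v)=(p-g_K(p))/p+((1-2p)/p)\x(v)$ supplied by Proposition~\ref{Prop:M_sym} and clear the factor of $p$: the $\x(v)$ terms combine with coefficient $(1-2p)+p=1-p$, leaving $(1-p)\x(v)\leq g_K(p)$, which is the claimed bound.

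For the second bullet the argument is identical in spirit: the white self-loop at $v\in\vwk$ contributes $\x(v)$ to $d_W(v)$, again forcing $d_G(v)\leq 1-\x(v)$, and substituting the second formula of Proposition~\ref{Prop:M_sym} collapses the $\x(v)$ coefficients to $p$, producing $p\,\x(v)\leq g_K(p)$. I do not anticipate any serious obstacle. The entire proposition is essentially a one-line algebraic consequence of Proposition~\ref{Prop:M_sym}, with the only genuine input being the observation that the self-loop at a monochromatic vertex reserves at least $\x(v)$ of the total mass for the same-colored degree, so gray degree cannot exceed $1-\x(v)$.
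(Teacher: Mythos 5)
Your argument is correct. The paper cites Proposition~\ref{Prop:x_bd} from~\cite{M_sym} without reproducing a proof, but your derivation is the natural and standard one: the definition of $d_B(v)$ for a black vertex $v$ includes the self-loop contribution $\x(v)$, so $d_G(v)\leq 1-\x(v)$, and substituting the gray-degree formula from Proposition~\ref{Prop:M_sym} and clearing denominators gives exactly $(1-p)\x(v)\leq g_K(p)$; the $p\geq 1/2$ case is the color-and-parameter swap. The algebra checks out, and nothing is missing.
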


\section{Proof of Theorem~\ref{Thm:established_gamma}: Computation of the $\gamma_{\hh}$ function}
\label{sec:comp_gamma}

In this section we compute the $\gamma_{\hh}$ function, which gives an upper bound for the edit distance function. Recall that for any $t\geq 1$, $h\geq 2t+2$ and $a\in\{0,\ldots,t\}$, we denote $\ell_a = \left \lceil \frac{h}{t+a+1}\right\rceil$.

\begin{proof}[Proof of Theorem~\ref{Thm:established_gamma}]
Our first observation is the value of the chromatic number of $C_h^t$, denoted $\chi(C_h^t)$.
\begin{proposition}[Prowse-Woodall~\cite{P-W}] \label{prop:chromatic}
Let $t\geq 1$ and $h\geq\max\{t+1,3\}$ be positive integers. Let $h=q(t+1)+r$, where $r\in\{0,\ldots,t\}$. Then, $\chi(C_h^t)=t+\lceil r/q\rceil+1$. In particular, if $h\geq\max\{t(t+1),3\}$, then
\begin{align*}
   \chi(C_h^t)=\left\lbrace \begin{array}{ll}
                               t+1, & \mbox{ if $(t+1) \mid h$;} \\
                               t+2, & \mbox{ if $(t+1)\mid\!\!\!\not\;\; h$.}
                            \end{array} \right.
\end{align*}
\end{proposition}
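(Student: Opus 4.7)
The plan is to take the general Prowse--Woodall formula as a black box and verify the specialization. The first sentence of the proposition is exactly the Prowse--Woodall theorem: if $h \geq \max\{t+1, 3\}$ and $h = q(t+1) + r$ with $r \in \{0, 1, \ldots, t\}$, then $\chi(C_h^t) = t + \lceil r/q \rceil + 1$. I would invoke this directly from \cite{P-W}.

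For the ``in particular'' clause, the key observation is that $h \geq t(t+1)$ forces $q \geq t$. Indeed, $q = (h-r)/(t+1) \geq (t(t+1) - t)/(t+1) = t - t/(t+1) > t-1$, and since $q$ is an integer, $q \geq t$. Since $r \leq t \leq q$, the ceiling $\lceil r/q \rceil$ equals $0$ when $r = 0$ and $1$ when $1 \leq r \leq t$. Substituting back into the Prowse--Woodall formula gives $\chi(C_h^t) = t+1$ when $(t+1) \mid h$ and $\chi(C_h^t) = t+2$ otherwise, which is the claim.

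If I instead wanted to prove this specialization directly (without appealing to the general result), the lower bound would be short: $C_h^t$ contains a clique on any $t+1$ consecutive vertices, so $\chi(C_h^t) \geq t+1$; and when $(t+1) \nmid h$, two consecutive members of an independent set must be at cyclic distance at least $t+1$, so $\alpha(C_h^t) = \lfloor h/(t+1)\rfloor = q$ and thus $\chi(C_h^t) \geq \lceil h/q\rceil \geq t+2$. For the upper bound in the divisible case, the coloring $i \mapsto i \bmod (t+1)$ is proper: two same-colored vertices differ by a nonzero multiple of $t+1$, and since $h$ itself is a multiple of $t+1$, their cyclic distance remains a positive multiple of $t+1$, hence at least $t+1$.

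The main obstacle is the upper bound in the non-divisible case. Here one must build a $(t+2)$-coloring by partitioning $\{1, \ldots, h\}$ into $t+2$ independent sets, each of size at most $q = \alpha(C_h^t)$. The hypothesis $q \geq t \geq r$ provides exactly enough room since $(t+2) q \geq q(t+1) + r = h$, but the wrap-around around the cycle requires a careful distribution of the $(t+2)$ color classes so that no two same-color vertices end up within cyclic distance $t$; this is precisely the nontrivial construction in Prowse--Woodall, which we invoke rather than reproduce.
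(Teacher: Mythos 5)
Your proposal is correct and takes the same route as the paper, which simply cites Prowse--Woodall and treats the specialization as immediate; you've spelled out the short derivation ($h\geq t(t+1)$ forces $q\geq t\geq r$, so $\lceil r/q\rceil\in\{0,1\}$ according as $r=0$ or not), which is exactly what needs checking. The optional direct-proof sketch is also sound, and you were right to flag the $(t+2)$-coloring in the non-divisible case as the one genuinely nontrivial step that the citation is covering.
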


Let $h\geq\max\{t(t+1),2t+2\}$ and $\chi=\chi(C_h^t)$. Denote the vertices of $C_h^t$ by $\{1,\ldots,h\}$ such that distinct $i$ and $j$ are adjacent if and only if $|i-j|\leq t\pmod{h}$. For each $a\in\{0,\ldots,t\}$, we first show that $\left(a,\ell_a-1\right)\in\Gamma=\Gamma(\forb(C_h^t))$ and then show that $\left(a,\ell_a\right)\not\in\Gamma$. We will also show that if $\chi>t+1$ then $\{(t+1,0),\ldots,(\chi-1,0)\}\subset\Gamma$ but that $(t+1,1)\not\in\Gamma$.

This will imply that $\Gamma^*\subseteq\left\{(a,\ell_a-1) : a=0,1,\ldots,t \right\}\cup\{(\chi-1,0)\}$, which is a stronger result than we need.~\\

\noindent\textbf{Case 1: $a\in\{0,\ldots,t\}$.}~\\
\indent First, we show that $\left(a,\ell_a-1\right)\in\Gamma$. By contradiction, assume there is a partition of $V(C_h^t)$ into $a$ independent sets and $\ell_a-1$ cliques. Let $k=\ell_a-1$, and let $C_1,\ldots,C_k$ be the cliques. We may assume that the vertices in each $C_i$ are consecutive. This is because if $j_1$ and $j_2$ are in the same clique, then by the nature of adjacency in the power of a cycle, every vertex between $j_1$ and $j_2$ is adjacent to every member of the clique, and hence can be added to the clique. Thus, $|C_i|\leq t+1$ for $i=1,\ldots,k$.

For $i=1,\ldots,k-1$, let $B_i$ be the set of vertices between $C_i$ and $C_{i+1}$, and let $B_k$ be the set of vertices between $C_k$ and $C_1$. The sets $B_i$ might or might not be empty. If some $|B_i|\geq a+1$, then the first $a+1\leq t+1$ vertices form a clique and so must be in different independent sets, which is not possible since there are only $a$ independent sets.  Therefore, $|B_i|\leq a$ for $i=1,\ldots,k$.

Consequently, we need $k(t+a+1)\geq h$ in order to cover $C_h^t$ with $a$ independent sets and $k$ cliques. Hence, $k\geq\ell_a$, a contradiction to our choice of $k$. Thus $\left(a,\ell_a-1\right)\in\Gamma$ for $a=0,\ldots,t$.

Next, we show that $\left(a,\ell_a\right)\not\in\Gamma$. Again, let $k=\ell_a-1$. For $i=1,\ldots,k$, let $S_i=\{(i-1)(t+a+1)+1,\ldots,i(t+a+1)\}$ and let $S_{k+1}=\{1,\ldots,h\}-\cup_{i=1}^k S_i$.  For $i=1,\ldots,k$, let $C_i$ be the first $t+1$ vertices of $S_i$ and let $C_{k+1}$ be the first $\min\{t+1,|S_{k+1}|\}$ vertices of $S_{k+1}$. For $j=1,\ldots,a$, let $A_j$ consist of the $(t+1+j)^{\rm th}$ vertex of $S_1,\ldots,S_k$ and the $(t+1+j)^{\rm th}$ vertex of $S_{k+1}$ if $|S_{k+1}|\geq t+1+j$.

The sets $\left(A_1,\ldots,A_a,C_1,\ldots,C_{k+1}\right)$ form a partition of $V(C_h^t)$. Clearly each $C_i$, $i=1,\ldots,k$, is a clique of size $t+1$ and since there is a clique of size $t+1$ between pairs of vertices in each $A_j$, each $A_j$ is an independent set. Thus $\left(a,\ell_a\right)\not\in\Gamma$ for $a=0,\ldots,t$.~\\

\noindent\textbf{Case 2: $a\geq t+1$.}~\\
\indent If $(t+1)\mid h$, then Proposition~\ref{prop:chromatic} gives that $C_h^t$ can be partitioned into $t+1$ independent sets and so $(t+1,0)\not\in\Gamma$. If $(t+1)\mid\!\!\!\not\;\; h$, then Proposition~\ref{prop:chromatic} gives that $\chi\geq t+2$ and since $C_h^t$ cannot be partitioned into fewer than $\chi$ independent sets, we have $(t+1,0),\ldots,(\chi-1,0)\in\Gamma$. Since $C_h^t$ can be partitioned into $\chi$ independent sets, $(\chi,0)\not\in\Gamma$.

Finally, let $k=\lceil h/(t+1)\rceil-1$. For $j=1,\ldots,t+1$, let $A_j=\{(i-1)(t+1)+j : i=1,\ldots,k\}$. Let $C_0=\{k(t+1)+1,\ldots,h\}$. The sets $\left(A_1,\ldots,A_{t+1},C_0\right)$ form a partition of $V(C_h^t)$. Clearly, $C_0$ is a clique of size at most $t+1$ and since there are at least $t$ vertices between pairs of vertices in each $A_j$, each $A_j$ is an independent set. Thus $\left(t+1,1\right)\not\in\Gamma$.

Using Proposition~\ref{Prop:components}, if $h=q(t+1)+r$ where $r\in\{0,\ldots,t\}$, then
\begin{align*}
   \gamma_{\hh}(p) &= \underset {a \in \lbrace 0,1,\ldots,t\rbrace }{\min} \left\lbrace \frac{p(1-p)}{a(1-p)+\left(\ell_a-1\right)p} \right\rbrace, & \mbox{ if $r=0$;} \\
   \gamma_{\hh}(p) &= \underset {a \in \lbrace 0,1,\ldots ,t \rbrace }{\min} \left\lbrace \frac{p}{t+\lceil r/q\rceil}, \frac{p(1-p)}{a(1-p)+\left(\ell_a-1\right)p}\right\rbrace, & \mbox{ if $r\neq 0$.}
\end{align*}

Restricting ourselves to $h\geq\min\{t(t+1),4\}$, we have the result in the statement of the theorem.
\end{proof}

\section{Forbidden Cycles}
\label{sec:forbidden_cycles}

Before we can prove Theorem~\ref{Thm:main}, we need to study the properties of the CRGs into which $C_h^t$ does not embed. Recall that we may assume $h\geq 2t+2$. An important property of such CRGs is that the set of lengths of gray cycles on black vertices is restricted, as is shown in Lemma~\ref{lem:cycle_length_short}. Its proof needs the technical inequalities in Facts~\ref{fact:two_part_simple} and~\ref{fact:bounds_on_h}. For completeness, we give their proofs in Section \ref{sec:proofs}.

\begin{fact}
\label{fact:two_part_simple}
Let $h, x, y$ be positive integers. Then
\begin{enumerate}[(a)]
\item $\left \lfloor h/x \right \rfloor \geq y$ if and only if $\left \lfloor h/y \right \rfloor \geq x$.\label{case:xygeq}
\item $\left \lceil h/x \right \rceil \leq y$ if and only if $\left \lceil h/y \right \rceil \leq x$.\label{case:xyleq}
\end{enumerate}
\end{fact}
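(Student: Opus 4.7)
The plan is to reduce both biconditionals to a single symmetric inequality in $x$ and $y$, and then read off the equivalence.

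For part~(\ref{case:xygeq}), I would use the standard fact that for an integer $m$ and a real number $r$, the inequality $\lfloor r \rfloor \geq m$ is equivalent to $r \geq m$. Applied with $r = h/x$ and $m = y$ (an integer), this yields $\lfloor h/x\rfloor \geq y \iff h/x \geq y \iff h \geq xy$, where the last step uses $x>0$. Applied again with $r=h/y$ and $m=x$, we get $\lfloor h/y\rfloor \geq x \iff h \geq xy$. Since the condition $h \geq xy$ is symmetric in $x$ and $y$, both biconditionals give the same condition, and part~(\ref{case:xygeq}) follows.

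For part~(\ref{case:xyleq}), I would use the analogous standard fact: for integer $m$, $\lceil r\rceil \leq m \iff r \leq m$. Setting $r=h/x$, $m=y$ gives $\lceil h/x\rceil \leq y \iff h \leq xy$, and setting $r=h/y$, $m=x$ gives $\lceil h/y\rceil \leq x \iff h \leq xy$. Again the pivot condition $h \leq xy$ is symmetric, so both one-sided statements are equivalent.

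There is essentially no obstacle here; the only thing to be careful about is invoking the integrality of $x$ and $y$ at the right moment (without it the equivalences $\lfloor h/x\rfloor \geq y \iff h/x \geq y$ and $\lceil h/x\rceil \leq y \iff h/x \leq y$ would fail), and the positivity of $x,y$ so that multiplying through by them preserves the direction of the inequality. Since these hypotheses are explicit in the statement, the whole argument is a two-line chain of equivalences for each part.
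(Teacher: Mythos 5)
Your proof is correct and is a bit cleaner than the paper's. The paper observes that, by the symmetry of the biconditional in $x$ and $y$, it suffices to prove the forward implication, and then does so by writing $h=qx+r$ (for part~(a)) or $h=qx-r$ (for part~(b)) with $r\in\{0,\ldots,x-1\}$ and chasing the division-with-remainder arithmetic. You instead reduce each side to a single symmetric pivot condition: $\lfloor h/x\rfloor\geq y \iff h\geq xy$ and $\lceil h/x\rceil\leq y \iff h\leq xy$, using the elementary equivalences $\lfloor r\rfloor\geq m\iff r\geq m$ and $\lceil r\rceil\leq m\iff r\leq m$ valid when $m$ is an integer. This avoids explicit remainders and makes both directions of the biconditional fall out at once; it also surfaces the underlying fact that the biconditional is really the statement that the condition $h\geq xy$ (resp.\ $h\leq xy$) can be read off from either quotient. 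Both proofs are equally valid; yours is arguably more transparent about \emph{why} the symmetry holds, while the paper's exploits symmetry of the claim itself to halve the work and then computes.
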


\begin{fact}\label{fact:bounds_on_h} Let $t\geq 1$, $h\geq\max\{t(t-1),2t+2\}$, and $a\in\{0,\ldots,t-1\}$ be positive integers. Then $\left\lceil\frac{h}{t+a+1}\right\rceil\leq\left\lfloor\frac{h}{t}\right\rfloor$. \end{fact}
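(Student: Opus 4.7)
The plan is to reduce the ceiling-floor inequality to a clean statement about the remainder of $h$ modulo $t$. I would write $h = qt + r$ where $q := \lfloor h/t \rfloor$ and $0 \leq r \leq t-1$. Since $q$ is a positive integer, the desired inequality $\lceil h/(t+a+1)\rceil \leq q$ is equivalent to $h \leq q(t+a+1)$; substituting $h = qt+r$, this rearranges to $r \leq q(a+1)$.

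Because $a+1 \geq 1$, this final inequality is weakest when $a = 0$, so it is enough to verify $r \leq q$. The hypothesis $h \geq t(t-1)$ gives $q = \lfloor h/t \rfloor \geq t-1$, while $r \leq t-1$ by definition of the remainder, so $r \leq t-1 \leq q$, which completes the argument.

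I do not anticipate any real obstacle: the proof is essentially two lines once the division-with-remainder notation is set up. The only point worth flagging is that the hypothesis $h \geq t(t-1)$ is used with exactly the right strength, since it forces $\lfloor h/t\rfloor \geq t-1$ and this is precisely what dominates the remainder $r \leq t-1$ in the worst case $a=0$; the companion bound $h \geq 2t+2$ plays no role here and presumably reflects the standing assumption of Section~\ref{sec:forbidden_cycles}. As a sanity check, one could alternatively apply Fact~\ref{fact:two_part_simple}(\ref{case:xyleq}) with $x = t+a+1$ and $y = q$ to rephrase the target as $\lceil h/q\rceil \leq t+a+1$, which, using $\lceil h/q\rceil = t + \lceil r/q\rceil$, again reduces to $\lceil r/q\rceil \leq a+1$ and hence to $r \leq q$ when $a = 0$.
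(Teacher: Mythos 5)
Your proof is correct and follows essentially the same route as the paper: both reduce to the case $a=0$ (you by noting the target inequality $r\leq q(a+1)$ is tightest there, the paper by observing $\lceil h/(t+a+1)\rceil\leq\lceil h/(t+1)\rceil$), write $h=qt+r$, and invoke $h\geq t(t-1)$ to get $q=\lfloor h/t\rfloor\geq t-1\geq r$. Your remark that $h\geq 2t+2$ is not needed here is also accurate.
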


Lemma~\ref{lem:cycle_length_short} is a key lemma in proving our main result of Theorem~\ref{Thm:main}.

\begin{lemma} \label{lem:cycle_length_short}
Let $p\in (0,1/2]$ and let $t\geq 1$ and $h\geq 2t+2$ be integers. Let $\tk$ be a $p$-core CRG with exactly $a$ white vertices such that $C_h^t \not\mapsto \tk$. Let $K$ be the sub-CRG of $\tk$ induced by the set of all black vertices of $\tk$. Then, the following occurs:
\begin{enumerate}[(a)]
   \item If $a\in\{0,\ldots,t-1\}$ and $h\geq t^2-t$, then $K$ has no gray cycle which has length in $\left\{\left\lceil\frac{h}{t+a+1}\right\rceil, \ldots, \left\lfloor\frac{h}{t}\right\rfloor\right\}$.\label{case:a}
   \item If $a=t$, then $|\vk|\leq\ell_t-1$.\label{case:t}
   \item If $a\geq t+1$, then $(t+1)\mid\!\!\!\not\;\; h$ and $V(K)=\emptyset$.\label{case:tplusone}
\end{enumerate}
\end{lemma}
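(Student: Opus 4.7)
The plan is to prove each part by contradiction: assume the forbidden structure exists and construct an embedding of $C_h^t$ into $\tk$. The crucial consequence of Proposition~\ref{Prop:pcore} and $p \leq 1/2$ is that $\ebk = \emptyset$ in $\tk$ and no white edge of $\tk$ has a white endpoint; hence every edge incident to a white vertex is gray, and only edges between pairs of black vertices can be white. This drastically limits which embedding constraints need to be checked.

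For part~(c), suppose $a \geq t+1$. If $(t+1) \mid h$, Proposition~\ref{prop:chromatic} gives $\chi(C_h^t) = t+1$, and sending the color classes of a proper $(t+1)$-coloring to $w_1, \ldots, w_{t+1}$ is an embedding (all image edges are gray), a contradiction; so $(t+1) \nmid h$. If further $V(K) \neq \emptyset$, pick any $b \in V(K)$ and use the partition $(A_1, \ldots, A_{t+1}, C_0)$ from Case~2 of the proof of Theorem~\ref{Thm:established_gamma}, mapping $A_j \to w_j$ and $C_0 \to b$; all image edges are gray, so this is an embedding and a contradiction, forcing $V(K) = \emptyset$.

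For part~(b), suppose $a = t$ and $|V(K)| \geq \ell_t$. One checks $\ell_t(t+1) \leq h \leq \ell_t(2t+1)$, so $V(C_h)$ can be partitioned cyclically into $\ell_t$ consecutive blocks of sizes $s_i \in \{t+1,\ldots,2t+1\}$ summing to $h$. In each block take the first $s_i - t$ vertices to form a clique $C_i$ of size at most $t+1$, and assign the remaining $t$ vertices, in order, to independent sets $A_1, \ldots, A_t$. Consecutive cliques are then separated by exactly $t$ positions, so all pairs of cliques are pairwise non-adjacent in $C_h^t$; consecutive elements of each $A_j$ lie in adjacent blocks at $C_h$-distance equal to a block size $\geq t+1 > t$, so each $A_j$ is independent. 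Choosing distinct blacks in $V(K)$ as targets for $C_1, \ldots, C_{\ell_t}$ and mapping $A_j \to w_j$ gives a valid embedding: the only possibly white edges in the image are between blacks, and they correspond to non-adjacent cliques, while all other edges are gray. This is the sought contradiction.

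The main obstacle is~(a). Assume $K$ has a gray cycle $u_1 u_2 \cdots u_c u_1$ of length $c \in \{\ell_a, \ldots, \lfloor h/t\rfloor\}$, equivalently $ct \leq h \leq c(t+a+1)$. Partition $V(C_h)$ cyclically into $c$ consecutive blocks $B_1, \ldots, B_c$ of sizes $n_i \in \{t,\ldots,t+a+1\}$ summing to $h$, and define $\varphi$ by sending the first $\min(n_i, t+1)$ vertices of $B_i$ to $u_i$ and, when $n_i > t+1$, the remaining $n_i - (t+1)$ vertices of $B_i$ to $w_1, w_2, \ldots$ in order. The substantive verifications are: (i)~the $u_i$-image in $B_i$ is a clique in $C_h^t$ of at most $t+1$ consecutive vertices; (ii)~any block using $w_j$ has $n_i \geq t+1+j \geq t+2$, so two copies of $w_j$ lie in distinct blocks at $C_h$-distance $\geq t+2 > t$ and are non-adjacent; (iii)~for a non-cycle pair $u_i, u_{i'}$ whose $\tk$-edge may be white, the closest corresponding vertex pair in $C_h$ is at distance at least $t+1$, since any intermediate block contributes at least $t$. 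Every other edge of $\tk$ incident to the image is either a gray cycle edge or incident to a white vertex, hence gray. The delicate step is~(iii), where the cyclic wrap-around and heterogeneous block sizes must be handled with care, especially for small values of $c$.
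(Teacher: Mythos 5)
Your proof is correct and uses essentially the same strategy as the paper: construct cyclic partitions of $V(C_h^t)$ into consecutive blocks, form cliques and independent sets from them, map cliques to black vertices (the gray cycle in~(a), arbitrary distinct blacks in~(b), a single black in~(c)) and independent sets to white vertices, and then verify that this yields an embedding, giving a contradiction. The only differences are cosmetic details of the block constructions — for instance, in~(a) you allow block sizes ranging over $\{t,\ldots,t+a+1\}$ with a variable number of white slots per block, whereas the paper uses balanced block sizes $\lfloor h/k\rfloor$ or $\lceil h/k\rceil$ with the same number $a'$ of white slots in every block.
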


\noindent\textbf{Note:} We interpret a gray cycle of length 2 to be a gray edge.

\begin{proof}[Proof of Lemma~\ref{lem:cycle_length_short}]
Denote the vertices of $C_h^t$ by $\{1,\ldots,h\}$ such that distinct $i$ and $j$ are adjacent if and only if $|i-j|\leq t\pmod{h}$.~\\

\noindent\textbf{Partition:} Let $K$ have a gray cycle on vertex set $\{v_1,\ldots,v_k\}$ such that $v_iv_{i+1}$ is a gray edge, where the indices are taken modulo $k$. We describe a partition of $V(C_h^t)$, which gives an interval of forbidden gray cycle lengths. We will construct at most $a$ independent sets and $k$ cliques $C_1,\ldots,C_k$ such that there is no edge between nonconsecutive cliques.

Partition $V(C_h^t)$ into $k$ sets of consecutive vertices $S_1,\ldots,S_k$, with each set $S_i$ of size either $\lceil h/k\rceil$ or $\lfloor h/k\rfloor$. We will eventually construct the at most $a$ independent sets and $k$ cliques $C_1,\ldots,C_k$ with $C_i\subseteq S_i$ such that there is no edge between $C_i$ and $C_{i'}$ unless $|i-i'|=1\pmod{k}$. 

If $a=0$, then simply let $C_i=S_i$ for $i=1,\ldots,k$. Using Fact~\ref{fact:numth}, each $C_i$ has size at least $t$ and so nonconsecutive sets have no edge between them. Fact~\ref{fact:numth} is a simple observation of number theory.

\begin{fact}\label{fact:numth} A set of size $h$ can be partitioned into sets of size $t$ or $t+1$ if and only if $h\geq t(t-1)$. Moreover, for any $k\in\left\{\lceil h/(t+1)\rceil,\ldots,\lfloor h/t\rfloor\right\}$, such a partition exists with exactly $k$ parts. \end{fact}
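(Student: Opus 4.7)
The plan is to translate the partition question into solving a linear Diophantine equation. A partition of a size-$h$ set into exactly $k$ parts, each of size $t$ or $t+1$, corresponds to a pair of nonnegative integers $(a,b)$ with $a$ parts of size $t$ and $b$ parts of size $t+1$, satisfying $a+b=k$ and $at+b(t+1)=h$. Solving yields $b = h-kt$ and $a = k(t+1)-h$, both nonnegative precisely when $kt \leq h \leq k(t+1)$. Since $k$ is an integer, this is equivalent to $\lceil h/(t+1)\rceil \leq k \leq \lfloor h/t\rfloor$, which immediately gives the ``moreover'' clause.

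For the main claim, a partition into some number of parts exists exactly when this admissible interval of $k$-values is nonempty. I would verify nonemptiness whenever $h \geq t(t-1)$ by writing $h = qt + r$ with $0 \leq r \leq t-1$ and observing that the hypothesis forces $q \geq t-1$. Then $r \leq t-1 \leq q$, so $h = qt + r \leq q(t+1)$, hence $\lceil h/(t+1)\rceil \leq q = \lfloor h/t\rfloor$ and a valid $k$ exists. For tightness, at $h = t(t-1)-1 = t^2-t-1$ one computes $\lfloor h/t\rfloor = t-2$ while $\lceil h/(t+1)\rceil = t-1$, so the admissible interval is empty and no partition exists; this matches the Frobenius number of the numerical semigroup generated by $\{t,t+1\}$.

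There is no genuine obstacle here; the argument is routine elementary number theory. The only mild subtlety is how to read the ``if and only if'' statement: small values such as $h = t$ or $h = t+1$ admit trivial one-part partitions even though they are less than $t(t-1)$ in general. The intended and useful content, and what is invoked in the surrounding argument, is that $t(t-1)$ is the sharp threshold beyond which every $h$ is partitionable, together with the exact characterization of the possible numbers of parts; the Diophantine reduction above delivers both.
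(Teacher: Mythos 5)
The paper does not actually supply a proof of this fact: it is asserted inside the proof of Lemma~\ref{lem:cycle_length_short} with the remark that it is ``a simple observation of number theory,'' so there is no paper argument to compare against. Your Diophantine reduction is correct and complete and delivers both halves of the statement cleanly: from $a+b=k$ and $at+b(t+1)=h$ you get $b=h-kt$ and $a=k(t+1)-h$, whose nonnegativity is exactly $\lceil h/(t+1)\rceil\leq k\leq\lfloor h/t\rfloor$, and writing $h=qt+r$ with $0\leq r\leq t-1$ shows that $h\geq t(t-1)$ forces $q\geq t-1\geq r$, hence $h\leq q(t+1)$ and the interval is nonempty. You are also right to point out that the ``if and only if'' is imprecise as literally written (e.g.\ $h=t$ or $h=t+1$ admit one-part partitions though $h<t(t-1)$ when $t\geq 3$); what the surrounding argument actually invokes is the forward direction together with the exact set of admissible $k$, and the sharpness at $h=t(t-1)-1=t(t+1)-t-(t+1)$ is the Frobenius number of $\{t,t+1\}$, exactly as you observe.
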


So, we assume $a\geq 1$ and choose $a'\in\lbrace\lfloor h/k\rfloor-t,\lceil h/k\rceil-(t+1)\rbrace$ such that $0\leq a'\leq a$. This is possible as long as both (a) $0\leq\lfloor h/k\rfloor-t$ and (b) $\lceil h/k\rceil-(t+1)\leq a$. (This is only nontrivial if $k\mid h$, in which case at least one of the two choices of $a'$ will be in $0,\ldots,a$.)

If $a'=0$, again let $C_i=S_i$ for $i=1,\ldots,k$. If $a'\geq 1$, let $A_j$ consist of the $j^{\rm th}$ vertex of each of $S_1,\ldots,S_k$ and let $C_i=S_i-\cup_{j=1}^{a'}A_j$. Observe that if $a'\geq 1$, then $|S_i|\geq t+1$ and so there are at least $t$ vertices between each pair of vertices in every $A_j$. Therefore, $A_j$ is an independent set for $j=1,\ldots,a'$. We have $|C_i|\leq t+1$ so $C_i$ is a clique for $i=1,\ldots,k$. In addition, $|C_i|\geq t$ and so there are no edges between $C_i$ and $C_{i'}$ unless $|i-i'|\pmod k$.

The mapping, for all $a\geq 0$, is as follows: Map each $A_j$ to a different white vertex and $C_i$ to $v_i$ for $i=1,\ldots,k$. If $a=0$, Fact~\ref{fact:numth} gives that $K$ has no cycle with length in $\left\{\lceil h/(t+1)\rceil,\ldots,\lfloor h/t\rfloor\right\}$. If $a\geq 1$, Fact~\ref{fact:two_part_simple}, gives that $K$ has no cycle with length in
\begin{align}
   \left\lbrace\left\lceil\frac{h}{t+a+1}\right\rceil, \ldots, \left\lfloor\frac{h}{t}\right\rfloor\right\rbrace , \label{eq:part1}
\end{align}
and (\ref{eq:part1}) is valid in the case of $a=0$ also.~\\

\noindent\textbf{Case \textit{(\ref{case:a})}.}~\\
\indent The result is given by (\ref{eq:part1}). It suffices to show that $\left\lceil\frac{h}{t+a+1}\right\rceil\leq\left\lfloor\frac{h}{t}\right\rfloor$. Fact~\ref{fact:bounds_on_h} gives that this holds if $h\geq t^2-t$.~\\

\noindent\textbf{Case \textit{(\ref{case:t})}.}~\\
\indent In this case, we use a second partition. Partition $V(C_h^t)$ into $k+1$ consecutive parts, $S_1,\ldots,S_{k+1}$, where $k=\lceil h/(2t+1)\rceil-1$ and $r=h-(k-1)(2t+1)$. Since $h\geq 2t+2$, $k\geq 1$. Let $|S_1|=\cdots=|S_{k-1}|=2t+1$, $|S_k|=\lceil r/2\rceil$ and $|S_{k+1}|=\lfloor r/2\rfloor$. Note that $t+1\leq |S_{k+1}|\leq |S_k|\leq 2t+1$.

For $j=1,\ldots,t$, let $A_j$ consist of the $j^{\rm th}$ vertex in each part and let $C_i=S_i-\bigcup_{j=1}^tA_j$.

Each of $A_1,\ldots,A_t$ is an independent set. Furthermore, there are no edges between $C_i$ and $C_{i'}$ if $i\neq i'$.  Therefore, $K$ has at most $k=\lceil h/(2t+1)\rceil-1$ vertices; otherwise, $A_1,\ldots,A_t$ can be mapped arbitrarily to each of the $t$ white vertices and $C_1,\ldots,C_{k+1}$ can be mapped arbitrarily to $k+1$ different black vertices in $K$.~\\

\noindent\textbf{Case \textit{(\ref{case:tplusone})}.}~\\
\indent If $(t+1)\mid h$, then $\chi(C_h^t)=t+1$ and $\tk$ having at least $t+1$ white vertices means that $C_h^t$ embeds in $\tk$, a contradiction. If $(t+1)\mid\!\!\!\not\;\; h$, then partition $V(C_h^t)$ into $k=\lfloor h/(t+1)\rfloor+1$ parts $S_1,\ldots,S_k$ of consecutive vertices, each of $S_1,\ldots,S_{k-1}$ of size $t+1$. For $j=1,\ldots,t+1$, let $A_j$ consist of the $j^{\rm th}$ vertex in each $S_i$ for $i=1,\ldots,k-1$. The graph induced by $V(C_h^t)-\bigcup_{j=1}^{t+1}A_j$ forms a clique of size at most $t$ in $S_k$. Since all vertices in $K$ are black, this clique will embed into any vertex of $V(K)$. Thus $V(K)=\emptyset$.
\end{proof}

\section{Proof of Theorem~\ref{Thm:main}: $\ed_{\hh}=\gamma_{\hh}$}
\label{sec:proof_main}

We will use Lemma~\ref{lem:cycle_length_short} to prove Theorem~\ref{Thm:main}. Recall that $h\geq 2t(t+1)+1\geq t(t+1)$. By Proposition~\ref{prop:chromatic}, this means $\chi(C_h^t)=t+1$ if $(t+1)\mid h$ and $\chi(C_h^t)=t+2$ if $(t+1)\mid\!\!\!\not\;\; h$.

\begin{proof}[Proof of Theorem~\ref{Thm:main}]
By definition $\ed_{\hh}(p)\leq \gamma_{\hh}(p)$ for all $p \in [0,1]$, so we need to show equality. ~\\

\noindent \textbf{Case 1:} $p\in [1/2,1]$.~\\
\indent Fact~\ref{fact:tlarge} below establishes that $\gamma_{\hh}(p)=\frac{1-p}{\ell_0-1}$ for $p\in [1/2,1]$. 

\begin{fact}\label{fact:tlarge}
   Let $h$ and $t$ be positive integers. If $h\geq (t+1)^2+1$, then
   \begin{align*}
      \frac{1-p}{\ell_0-1}\leq\frac{p}{t+1} .
   \end{align*}
   For $a\in\{1,\ldots,t\}$ if $h\geq (t+1)(t+a)+1$, then for all $p\in [1/2,1]$,
   \begin{align*}
      \frac{1-p}{\ell_0-1}\leq\frac{p(1-p)}{a(1-p)+(\ell_a-1)p} .
   \end{align*}
\end{fact}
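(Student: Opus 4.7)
The plan is to reduce each inequality to a purely combinatorial statement about $\ell_0$, $\ell_a$, $t$, and $a$ by exploiting monotonicity in $p$ over the range $[1/2,1]$. Both inequalities are affine after clearing denominators, and in each the left-hand side of the rearranged inequality is decreasing in $p$ while the right-hand side is nondecreasing, so it suffices to verify them at the worst point $p=1/2$.

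For the first inequality I would rearrange $\frac{1-p}{\ell_0-1} \leq \frac{p}{t+1}$ to $(\ell_0-1)p \geq (t+1)(1-p)$. By the monotonicity observation, it is enough to check this at $p=1/2$, which reduces to $\ell_0 \geq t+2$. The hypothesis $h \geq (t+1)^2+1$ gives $\ell_0 = \lceil h/(t+1)\rceil \geq \lceil((t+1)^2+1)/(t+1)\rceil = t+2$, completing this part.

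For the second inequality I would divide through by $1-p$ (valid for $p<1$; at $p=1$ both sides vanish) and cross-multiply to obtain the equivalent form $\ell_0 - \ell_a \geq a(1-p)/p$. Since $(1-p)/p \leq 1$ on $[1/2,1]$, it reduces to proving the combinatorial bound $\ell_0 \geq \ell_a + a$ whenever $h \geq (t+1)(t+a)+1$. I would prove this by a short case analysis on $k := \ell_a$. First, $h \geq (t+1)(t+a)+1 > t(t+a+1)$ forces $k \geq t+1$. From the ceiling definition, $\ell_a = k$ gives $h \geq (k-1)(t+a+1)+1$, and the inequality $\ell_0 \geq k+a$ is equivalent to $h \geq (k+a-1)(t+1)+1$. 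The algebraic identity
\begin{equation*}
(k-1)(t+a+1) - (k+a-1)(t+1) = a(k-t-2)
\end{equation*}
shows that for $k \geq t+2$ the ceiling bound on $h$ already implies what is needed. For the boundary case $k=t+1$, this difference is $-a$, so the ceiling bound alone falls short by exactly $a$; here the full hypothesis $h \geq (t+1)(t+a)+1$ contributes the missing margin and completes the case.

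The main obstacle is pinpointing and handling the boundary case $\ell_a = t+1$. Away from it, the ceiling arithmetic is generous, but at $k=t+1$ the required inequality is tight and the hypothesis $h \geq (t+1)(t+a)+1$ is essentially sharp. The rest of the argument is routine manipulation of floors and ceilings.
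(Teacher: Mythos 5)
Your proof is correct and follows essentially the same route as the paper: both parts reduce, via the observation that $(1-p)/p \leq 1$ on $[1/2,1]$, to the combinatorial bounds $\ell_0 \geq t+2$ and $\ell_0 \geq \ell_a + a$, which are then verified by ceiling arithmetic. Your case split on $k=\ell_a$ (isolating the tight boundary case $k=t+1$) is a minor reorganization of the paper's direct computation with $h=q(t+1)+r$.
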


\noindent\textbf{Note:} The condition $h\geq 2t(t+1)+1$ suffices to achieve all of the conclusions in Fact~\ref{fact:tlarge}.~\\

By Proposition~\ref{prop:values} below, $\ed_{\hh}(p)=\gamma_{\hh}(p)$ for the two values of $p\in\{1/2,1\}$.

\begin{proposition}[Balogh-Martin~\cite{B-M}] \label{prop:values}
   If $\hh$ is a hereditary property, then $\ed_{\hh}(1/2)=\gamma_{\hh}(1/2)$. Moreover, if $K_{\ell}\in\hh$ for all positive integers $\ell$, then $\ed_{\hh}(1)=\gamma_{\hh}(1)=0$ and if $\overline{K_{\ell}}\in\hh$ for all positive integers $\ell$, then $\ed_{\hh}(0)=\gamma_{\hh}(0)=0$.
\end{proposition}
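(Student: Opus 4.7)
The plan is to treat the three assertions separately, noting that at both endpoints $p\in\{0,1\}$ and at $p=1/2$ the space of $p$-core CRGs collapses to a very restricted family where the $g$-function is easy to read off, so each case reduces to a short structural argument rather than the detailed optimization needed for general $p$.

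For $p=1/2$, I would invoke Proposition~\ref{Prop:pcore}, which forces every $1/2$-core CRG to have all edges gray; hence such a CRG has the form $K(r,s)$ for some $r,s\geq 0$. For this kind of CRG, an embedding $H\mapsto K(r,s)$ is precisely a partition of $V(H)$ into at most $r$ independent sets and at most $s$ cliques (the preimages of the white and black vertices, respectively), so $K(r,s)\in\K(\hh)$ is equivalent to $(r,s)\in\Gamma(\hh)$. Combined with the Marchant--Thomason result that $\ed_{\hh}(p)=\min_{K\in\K(\hh)}g_K(p)$ with the minimum attained by a $p$-core, this yields $\ed_{\hh}(1/2)=\min_{(r,s)\in\Gamma(\hh)}g_{K(r,s)}(1/2)=\gamma_{\hh}(1/2)$.

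For $p=1$, the hypothesis $K_\ell\in\hh$ for every $\ell$ means that no graph in $\F(\hh)$ is a clique. Since $H\mapsto K(0,1)$ holds exactly when $H$ is a clique, we have $(0,1)\in\Gamma(\hh)$, and Proposition~\ref{Prop:components} gives $g_{K(0,1)}(p)=1-p$, which vanishes at $p=1$. Thus $0\leq\ed_{\hh}(1)\leq\gamma_{\hh}(1)\leq g_{K(0,1)}(1)=0$. The case $p=0$ is symmetric, using the witness $(1,0)\in\Gamma(\hh)$ together with $g_{K(1,0)}(p)=p$. The only nontrivial step is the correspondence at $p=1/2$ between embeddings into an all-gray CRG and the clique/independent-set partitions defining $\Gamma(\hh)$; once that correspondence is in hand, the remaining verification is routine, and the endpoint cases amount to identifying a single cheap witness CRG.
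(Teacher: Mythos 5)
The paper cites this proposition from Balogh--Martin~\cite{B-M} and does not reproduce a proof, so there is no in-text argument to measure your proposal against. That said, your derivation is correct as a self-contained verification using the machinery this paper does set up. Proposition~\ref{Prop:pcore} at $p=1/2$ forces an optimal $p$-core CRG to be all-gray, hence of the form $K(r,s)$ with $(r,s)\in\Gamma(\hh)$; together with the Marchant--Thomason fact that the minimum defining $\ed_{\hh}$ is attained at a $p$-core and the trivial bound $\ed_{\hh}\leq\gamma_{\hh}$, this gives $\ed_{\hh}(1/2)=\gamma_{\hh}(1/2)$. The endpoint cases are handled correctly by exhibiting the witness CRGs $K(0,1)$ and $K(1,0)$, which lie in $\K(\hh)$ precisely under the stated clique and independent-set hypotheses and whose $g$-values vanish at $p=1$ and $p=0$, respectively. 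The one thing worth flagging is that your argument at $p=1/2$ leans on the Marchant--Thomason $p$-core structure theorem, which postdates Balogh--Martin; the original proof in~\cite{B-M} necessarily went a different route. As a verification inside this paper's framework, however, the proposal is sound and essentially complete.
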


We have $\ed_{\hh}(p)\leq\gamma_{\hh}(p)$ and the two functions are equal at $p=1/2$ and at $p=1$. The function $\gamma_{\hh}(p)$ is linear over $p\in [1/2,1]$ for $h\geq 2t(t+1)+1$. By Proposition~\ref{prop:basic}, $\ed_{\hh}(p)$ is continuous and concave down, so we may conclude that $\ed_{\hh}(p)=\gamma_{\hh}(p)=\frac{1-p}{\ell_0-1}$ for $p\in [1/2,1]$. This concludes Case 1.~\\

Note that Proposition~\ref{prop:values} gives $\ed_{\hh}(0)=\gamma_{\hh}(0)=0$. Let $p\in (0,1/2)$ and $\ed_{\hh}(p)=g_{\tk}(p)$ for some $p$-core CRG $\tk$. Assume by contradiction that $g_{\tk}(p)<\gamma_{\hh}(p)$. Suppose $\tk$ has $a$ white vertices. Recall that for any $t\geq 1$, $h\geq 2t+2$ and $a\in\{0,\ldots,t\}$, we denote $\ell_a = \left \lceil \frac{h}{t+a+1}\right\rceil$. We consider several cases and show that we arrive at a contradiction in each case.~\\

\noindent \textbf{Case 2:} $a\geq t$ and $p\in (0,1/2)$.~\\
\indent If $a\geq t+1$, then by Lemma~\ref{lem:cycle_length_short}\textit{(\ref{case:tplusone})}, $V(K)=\emptyset$. As long as $h\geq\max\{t(t+1),3\}$, Proposition~\ref{prop:chromatic} gives that $\chi(C_h^t)\leq t+2$ with equality only if $(t+1)\mid\!\!\!\not\;\; h$ .  Thus, $a=t+1$ and Proposition~\ref{Prop:components} gives that $g_{\tk}(p)= p/(t+1)$, a contradiction to the assumption that $g_{\tk}(p)<\gamma_{\tk}(p)$.

If $a=t$, then Case~\textit{(\ref{case:t})} of Lemma~\ref{lem:cycle_length_short} gives that $|\vk|\leq \ell_t-1$. Consequently, $g_K(p)\geq\frac{1-p}{\ell_t-1}$. We can partition $\tk$ into $t+1$ sub-CRGs, $K$ and $t$ white vertices, and use Proposition~\ref{Prop:components} to conclude that
\begin{align*}
   \left(g_{\tk}(p)\right)^{-1} &\leq tp^{-1} + \left(\frac{1-p}{\ell_t-1}\right)^{-1} \\
   g_{\tk}(p) &\geq \frac{p(1-p)}{t(1-p)+(\ell_t-1)p}
\end{align*}

Hence, $\ed_{\hh}(p)\geq\gamma_{\hh}(p)$, again a contradiction. This concludes Case 2.~\\

\noindent \textbf{Case 3:} $a\leq t-2$ and $p\in (0,1/2)$.~\\
\indent Recall that $\tk$ is a CRG with $a$ white vertices, with $0\leq a\leq t-2$. By Proposition~\ref{Prop:components}, $g_{\tk}(p)^{-1}=a p^{-1}+g_K^{-1}(p)$. Therefore,
\begin{align}
   g_K(p) < \left(\max_{a'\in\{0,1,\ldots,t\}}\left\{\frac{a'-a}{p}+\frac{\ell_{a'}-1}{1-p}\right\}\right)^{-1} =: g_0(a,t;p) . \label{eq:rel_K_and_tK}
\end{align}

Given our assumptions on $g_K(p)$, Lemma~\ref{lem:degrees} gives lower bounds on the gray degree of vertices and the codegree of pairs of vertices. Recall that $\deg_G(v)$ denotes the \textit{number} of gray neighbors of $v\in V(K)$.
\begin{lemma} \label{lem:degrees}
   Let $p\in (0,1/2)$, $t\geq 1$ be an integer and $a\in\{0,\ldots,t-1\}$. Let $p_0=\ell_t^{-1}=\left\lceil\frac{h}{2t+1}\right\rceil^{-1}$. Let $K$ be a $p$-core CRG with all black vertices such that $g_K(p)<g_0(a,t;p)$. Then
   \begin{enumerate}[(a)]
      \item for every $v\in\vk$, we have $\deg_G(v)\geq \ell_{a+1}$, and \label{case:deg}
      \item for every $v,w\in\vk$, \label{case:codeg}
      \begin{align*}
            \deg_G(v,w) \geq \left\{\begin{array}{ll}
                                       \ell_{a+2}, & \mbox{if $a\leq t-2$;} \\
                                       1, & \mbox{if $a=t-1$ and $p\geq p_0$.}
                                    \end{array}\right.
      \end{align*}
   \end{enumerate}
\end{lemma}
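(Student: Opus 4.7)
The plan is to combine the structural results for $p$-core CRGs (Propositions~\ref{Prop:pcore}, \ref{Prop:M_sym}, and \ref{Prop:x_bd}) with the hypothesis $g_K(p)<g_0(a,t;p)$ specialized to carefully chosen values of $a'$. Since $K$ is a $p$-core CRG with $p\in(0,1/2)$ and $\vk=\vbk$, Proposition~\ref{Prop:pcore} gives $\ebk=\emptyset$, Proposition~\ref{Prop:M_sym} gives $d_G(v)=\frac{p-g_K(p)}{p}+\frac{1-2p}{p}\x(v)$ for every $v\in\vk$, and Proposition~\ref{Prop:x_bd} gives $\x(u)\leq g_K(p)/(1-p)$ for every $u\in\vk$, where $\x$ is the optimal weight function.

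For part~(\ref{case:deg}), since $\x(v)\geq 0$ and $1-2p>0$, we have $d_G(v)\geq (p-g_K(p))/p$; meanwhile $d_G(v)=\sum_{u\in N_G(v)}\x(u)\leq\deg_G(v)\cdot g_K(p)/(1-p)$. Together these yield
\begin{equation*}
\deg_G(v)\ \geq\ \frac{(1-p)(p-g_K(p))}{p\,g_K(p)}\ =\ \frac{1-p}{g_K(p)}-\frac{1-p}{p}.
\end{equation*}
Applying the hypothesis $1/g_K(p)>(a'-a)/p+(\ell_{a'}-1)/(1-p)$ at $a'=a+1$ (valid since $a\leq t-1$) gives $(1-p)/g_K(p)>(1-p)/p+\ell_{a+1}-1$, so $\deg_G(v)>\ell_{a+1}-1$, and integrality forces $\deg_G(v)\geq\ell_{a+1}$.

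For part~(\ref{case:codeg}), the key is the inclusion--exclusion identity
\begin{equation*}
d_G(v)+d_G(w)-d_G(v,w)\ =\ \sum_{u\in N_G(v)\cup N_G(w)}\!\!\x(u)\ \leq\ 1,
\end{equation*}
which combined with the single-vertex lower bound applied to each of $d_G(v), d_G(w)$ yields $d_G(v,w)\geq(p-2g_K(p))/p$. Using $\x(u)\leq g_K(p)/(1-p)$ on common gray neighbors then gives $\deg_G(v,w)\geq\frac{1-p}{g_K(p)}-\frac{2(1-p)}{p}$. When $a\leq t-2$, I would set $a'=a+2$ in the hypothesis to obtain $(1-p)/g_K(p)>2(1-p)/p+\ell_{a+2}-1$, hence $\deg_G(v,w)>\ell_{a+2}-1$, so $\deg_G(v,w)\geq\ell_{a+2}$.

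The case $a=t-1$ is the one delicate point, since $a'=t+1$ is out of range. Taking the best available $a'=t$ gives only $g_K(p)<p(1-p)/[(1-p)+(\ell_t-1)p]$. The key algebraic observation, which explains the definition of $p_0$, is that $p\geq p_0=1/\ell_t$ is equivalent to $(\ell_t-1)p\geq 1-p$; under this condition the right-hand side above is at most $p/2$, so $g_K(p)<p/2$, which in turn yields $(1-p)/g_K(p)-2(1-p)/p>0$, i.e., $\deg_G(v,w)\geq 1$. This threshold computation is the main (though mild) technical point of the proof; the remainder is a direct application of the identities for $p$-core CRGs.
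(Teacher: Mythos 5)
Your proposal is correct and follows the paper's argument essentially step for step: part~(\ref{case:deg}) uses $\deg_G(v)\geq d_G(v)\cdot(1-p)/g_K(p)$ via Propositions~\ref{Prop:M_sym} and~\ref{Prop:x_bd} together with the hypothesis at $a'=a+1$, and part~(\ref{case:codeg}) uses inclusion--exclusion plus the same bound on $\max\x(u)$ with $a'=a+2$ (respectively $a'=t$). The only cosmetic difference is in the $a=t-1$ sub-case: the paper directly evaluates $-\frac{1-p}{p}+\ell_t-1=\ell_t-p^{-1}\geq 0$ from $p\geq p_0$, whereas you first repackage the hypothesis as $g_K(p)<p/2$; these are the same computation.
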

\noindent\textbf{Note:} Since $h\geq 2t+2$, it is the case that $\ell_{a+1}\geq 2$ for $a\leq t-1$ and $\ell_{a+2}\geq 2$ for $a\leq t-2$.

Now we consider the derived graph $F$ with vertex set $\vk$ and edge set $\egk$. Using Lemma~\ref{lem:degrees}, the lower bound on the number of common gray neighbors of $v$ and $w$ gives a structural restriction on this graph. Note that the length of a path is defined to be the number of vertices in said path.
\begin{lemma} \label{lem:cycle_length_long}
   Fix integers $t\geq 1$, $h\geq\max\{t(t-1),2t+2\}$ and $a\in\{0,\ldots,t-1\}$. Recall that $\ell_a=\lceil h/(t+a+1)\rceil$ and let $L=\lfloor h/t\rfloor$.

   Let $F$ be a graph with no cycle with length in $\{\ell_a,\ldots,L\}$ and every pair of vertices either has at least $\ell_{a+2}\geq 2$ common neighbors if $a\leq t-2$ or has at least $1$ common neighbor if $a=t-1$.

   Then $F$ has no cycle of length more than $\ell_a-1$.
\end{lemma}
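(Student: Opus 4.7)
I argue by contradiction. Suppose $F$ contains a cycle of length greater than $L$, and choose a shortest such cycle $C=v_1v_2\cdots v_nv_1$, so $n>L$. By the hypothesis together with the minimality of $n$, the graph $F$ has no cycle of length in $[\ell_a,n-1]$.

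The first observation is a chord lemma: any chord $v_iv_j$ of $C$ splits it into two cycles whose lengths sum to $n+2$; since neither length can fall in $[\ell_a,n-1]$ and both are strictly less than $n$, each length is at most $\ell_a-1$, forcing $n\le 2\ell_a-4$. Consequently, whenever $n>2\ell_a-4$, the cycle $C$ is induced in $F$.

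The main step is to examine the pair $(v_1,v_{\ell_a-1})$, well-defined because $\ell_a\ge 3$ follows from $h\ge 2t+2$ (if $\ell_a\le 2$, then $F$ is edgeless and the lemma is trivial). The key identity is: for any vertex $u\notin\{v_1,v_2,\ldots,v_{\ell_a-1}\}$ adjacent to both $v_1$ and $v_{\ell_a-1}$, the cycle
\[
v_1,\,u,\,v_{\ell_a-1},\,v_{\ell_a-2},\,\ldots,\,v_2,\,v_1
\]
has length exactly $\ell_a$ and is therefore forbidden. Hence every common neighbor of $v_1$ and $v_{\ell_a-1}$ must lie on the short arc $\{v_2,\ldots,v_{\ell_a-2}\}$. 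In the subcase $n>2\ell_a-4$ (where $C$ is induced), each such $v_k$ would require at least one of $v_1v_k$ or $v_{\ell_a-1}v_k$ to be a chord; since none exist, $v_1$ and $v_{\ell_a-1}$ have no common neighbor at all, contradicting the codegree hypothesis which guarantees at least one.

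For the boundary subcase $L<n\le 2\ell_a-4$, I would further analyze each candidate $v_k$ on the short arc. Besides the length-$\ell_a$ bypass cycle above, the chord $v_1v_k$ (for $k\ge 3$) creates a cycle of length $n-k+2$, which lies in the forbidden range $[\ell_a,n-1]$ whenever $3\le k\le n-\ell_a+2$; a symmetric conclusion holds for the chord $v_{\ell_a-1}v_k$. These constraints whittle the allowed positions down to $\{v_2,v_{\ell_a-2}\}$ together with a (frequently empty) interior window determined by $n$ and $\ell_a$. The codegree bound $\ge\ell_{a+2}$ for $a\le t-2$, combined with the hypothesis on $h$, exceeds the size of this set and yields the contradiction; the case $a=t-1$ is cleaner, since a short calculation using $L=\lfloor h/t\rfloor$ and $\ell_{t-1}=\lceil h/(2t)\rceil$ gives $L\ge 2\ell_{t-1}-3$, placing us back in the induced-$C$ subcase where a single common neighbor suffices.

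The main obstacle will be carrying out the counting in the boundary subcase carefully: one must track exactly which short-arc positions survive the chord constraints as $n$ ranges over $(L,2\ell_a-4]$, and reconcile the gap between the allowed positions and the codegree lower bound using the hypothesis $h\ge\max\{t(t-1),2t+2\}$. The rest of the argument is the clean ``bypass-cycle of length $\ell_a$'' calculation together with the chord lemma.
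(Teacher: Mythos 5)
Your opening moves mirror the paper's: take a shortest cycle of length greater than $L$ (so no cycle of length in $[\ell_a,n-1]$ exists), and observe that a common neighbor of $v_1$ and $v_{\ell_a-1}$ lying outside $\{v_2,\ldots,v_{\ell_a-2}\}$ would close a forbidden cycle of length exactly $\ell_a$. The chord lemma (any chord forces $n\le 2\ell_a-4$, hence the cycle is induced once $n>2\ell_a-4$) is a genuinely different organizing idea from the paper and is correct as stated. However, the conclusion you draw from it is flawed in a way that breaks the case you most need it for.

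The induced-cycle step asserts that a common neighbor $v_k\in\{v_2,\ldots,v_{\ell_a-2}\}$ ``would require at least one of $v_1v_k$ or $v_{\ell_a-1}v_k$ to be a chord.'' This is false when $\ell_a=4$: then the short arc is the single vertex $v_2$, and both $v_1v_2$ and $v_2v_3=v_2v_{\ell_a-1}$ are \emph{cycle edges}, not chords. So $v_2$ is a common neighbor of $v_1$ and $v_{\ell_a-1}$ on a chordless cycle, and you get no contradiction. For $a\le t-2$ this is harmless because the codegree hypothesis demands at least $\ell_{a+2}\ge 2$ common neighbors, but for $a=t-1$ only one common neighbor is guaranteed, and $v_2$ supplies it. The scenario $\ell_{t-1}=4$ is squarely within the hypotheses of the lemma (e.g.\ $t=2$, $13\le h\le 16$, or $t=1$, $h\in\{7,8\}$), so the case $a=t-1$ is not closed. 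Separately, you explicitly leave the boundary subcase $L<n\le 2\ell_a-4$ for $a\le t-2$ as a sketch; this is a real piece of work, not a routine check, since for $a\le t-2$ the quantity $2\ell_a-4-L$ can indeed be positive.

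The paper's argument sidesteps both issues by never invoking chordlessness. For $a\le t-2$, after locating at least $\ell_{a+2}\ge 4$ common neighbors in $\{v_2,\ldots,v_{\ell_a-2}\}$, it takes the smallest index $i$ and largest index $j$ of common neighbors in the strictly interior set $\{v_3,\ldots,v_{\ell_a-3}\}$ and forms the two cycles $v_1v_iv_{i+1}\cdots v_\ell v_1$ (length $\ell-i+2$) and $v_1\cdots v_jv_{\ell_a-1}\cdots v_\ell v_1$ (length $j+\ell-\ell_a+2$). Both are shorter than $\ell$, hence have length at most $\ell_a-1$ by minimality of $\ell$, and summing the two inequalities against $j-i+1\ge\ell_{a+2}-2$ gives the arithmetic contradiction. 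No induced/boundary dichotomy is needed. For $a=t-1$, the paper moves to the pair $(v_1,v_{\ell_{t-1}})$ rather than $(v_1,v_{\ell_{t-1}-1})$ --- precisely the extra vertex that removes your $v_2$ corner case, since $v_2v_{\ell_{t-1}}$ is a chord once $\ell_{t-1}\ge 4$ --- and first checks that $\ell_{t-1}+1\le L$ so that off-path common neighbors are still forbidden. Then a single cycle through the common neighbor $v_i$ has length below $\ell$, and bounding it by $\ell_{t-1}-1$ produces $2\ell_{t-1}-L-5\ge 0$, which fails. To repair your proof you would need to (i) shift to the pair $(v_1,v_{\ell_a})$ for $a=t-1$ and verify $\ell_a+1\le L$, and (ii) carry out the boundary-subcase counting for $a\le t-2$, at which point you would essentially be reconstructing the paper's two-cycle inequality.
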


Now we consider a maximum-length path in the graph $F$. If such a path can be made into a cycle, then Proposition~\ref{prop:Hamiltonian} gives that $F$ must be Hamiltonian. By Lemma~\ref{lem:cycle_length_long}, this means that $|\vk|\leq\ell_a-1$ and, as such, $g_K(p)\geq \frac{1-p}{\ell_a-1}$, which is the $g$ function for the CRG on $\ell_a-1$ black vertices with all edges gray. This is a contradiction to our assumption in (\ref{eq:rel_K_and_tK}) by setting $a'=a$. Proposition~\ref{prop:Hamiltonian} is a common argument in proofs of Hamiltonian cycle results, including the classical theorems of Dirac~\cite{Dirac} and Ore~\cite{Ore}.

\begin{proposition} \label{prop:Hamiltonian}
   Let $F$ be a connected graph. If some path of maximum length forms a cycle, then $F$ is Hamiltonian.
\end{proposition}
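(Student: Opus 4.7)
The plan is to prove this by contradiction, using the standard rotation-extension argument for Hamiltonian cycles. Suppose $P = v_1 v_2 \cdots v_k$ is a path of maximum length in $F$ and that $P$ forms a cycle, meaning $v_1 v_k \in E(F)$. Set $C = v_1 v_2 \cdots v_k v_1$. If $V(C) = V(F)$, then $F$ is Hamiltonian and we are done, so we assume for contradiction that some vertex of $F$ is not on $C$.

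Since $F$ is connected, there must exist a vertex $u \notin V(C)$ adjacent to some vertex $v_i \in V(C)$; otherwise, the vertex set $V(C)$ would be a union of connected components separated from the remaining vertices. First, I would handle the edge case $i = 1$ or $i = k$ separately (or just unify by using the cycle structure), and then consider the walk that starts at $u$, enters the cycle at $v_i$, and traverses $C$ all the way around:
\begin{equation*}
u,\ v_i,\ v_{i+1},\ \ldots,\ v_k,\ v_1,\ v_2,\ \ldots,\ v_{i-1}.
\end{equation*}
This is a path in $F$ because every consecutive pair is either the edge $u v_i$, a cycle edge $v_j v_{j+1}$, or the closing edge $v_k v_1$. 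It visits $k+1$ distinct vertices, so it has length $k+1 > k$, contradicting the maximality of $P$.

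There is essentially no obstacle here beyond bookkeeping: the key observation is that once a maximum path closes into a cycle, any neighbor of the cycle outside the cycle yields a strictly longer path, so there can be no such neighbor, and connectedness then forces $V(C) = V(F)$. I would write the argument in roughly the four sentences above, taking care to note the use of the cycle edge $v_k v_1$ that lets us traverse $C$ starting from any vertex $v_i$ and ending at $v_{i-1}$.
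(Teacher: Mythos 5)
Your proof is correct and follows essentially the same rotation argument as the paper: connectedness forces a neighbor of the cycle to exist off the cycle, and traversing the cycle starting adjacent to that neighbor produces a strictly longer path. The only cosmetic difference is that you append the outside vertex at the start of the new path while the paper appends it at the end.
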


So we may assume that every maximum-length path in $F$ is not a cycle. Let $v_1\cdots v_{\ell}$ be such a maximum length path. The common neighbors of $v_1$ and $v_{\ell}$ in $F$ must be on this path, otherwise $F$ has a longer path. From Lemma~\ref{lem:degrees}, it follows that $v_1$ and $v_{\ell}$ have at least $\ell_{a+2}\geq 2$ common neighbors on this path. However, Lemma~\ref{Lem:gap} gives that there can only be one such neighbor, a contradiction.

\begin{lemma} \label{Lem:gap}
   Fix integers $t\geq 1$, $h\geq 2t+2$ and $a\in\{0,\ldots,t-1\}$. Recall that $\ell_a=\lceil h/(t+a+1)\rceil$.  Let $F$ be a graph with no cycle of length longer than $\ell_a-1$, with every vertex having degree at least $\ell_{a+1}\geq 2$ and with every pair of vertices having at least one common neighbor. Furthermore, let $F$ have the property that no maximum length path forms a cycle.

   Let $v_1\cdots v_{\ell}$ be a path of maximum length in $F$. Then $v_1$ and $v_{\ell}$ have exactly one common neighbor $v_c$ on this path. Furthermore, $N(v_1)\subseteq\{v_2,\ldots,v_c\}$ and $N(v_{\ell})\subseteq\{v_c,\ldots,v_{\ell}\}$.
\end{lemma}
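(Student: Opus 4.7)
The plan is to proceed by contradiction, combining standard Ore-type rerouting with explicit cycle constructions whose lengths are bounded by $\ell_a-1$. Since $P = v_1 v_2 \cdots v_\ell$ does not form a cycle by hypothesis, $v_1 v_\ell$ is not an edge of $F$, and every neighbor of $v_1$ or $v_\ell$ lies in $\{v_2,\ldots,v_{\ell-1}\}$ (else $P$ could be extended). For any $v_j \in N(v_1)$, rerouting $P$ using the edge $v_1 v_j$ yields the maximum-length path $v_{j-1} v_{j-2}\cdots v_1 v_j v_{j+1}\cdots v_\ell$, whose endpoints $v_{j-1}$ and $v_\ell$ must be non-adjacent by the no-max-path-cycle hypothesis; this is the standard Ore rule. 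Moreover, for $v_j \in N(v_1)$ with $j \geq 3$, the cycle $v_1 v_j v_{j-1}\cdots v_2 v_1$ has length $j$, forcing $j \leq \ell_a - 1$; hence $N(v_1) \subseteq \{v_2,\ldots, v_{\ell_a-1}\}$, and symmetrically $N(v_\ell) \subseteq \{v_{\ell-\ell_a+2},\ldots, v_{\ell-1}\}$. Applying the same reasoning to any common neighbor $v_c$, via the cycles $v_1 v_c v_{c-1}\cdots v_2 v_1$ (length $c$) and $v_\ell v_c v_{c+1}\cdots v_{\ell-1} v_\ell$ (length $\ell-c+1$), forces $c \in [\ell-\ell_a+2,\ell_a-1]$.

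Next, I would establish $N(v_1) \subseteq \{v_2,\ldots, v_c\}$ for any common neighbor $v_c$. Suppose for contradiction that $v_j \in N(v_1)$ with $j > c$. Consider the cycle
\[
v_\ell\ v_c\ v_{c-1}\cdots v_1\ v_j\ v_{j+1}\cdots v_{\ell-1}\ v_\ell,
\]
whose vertices are distinct because $c < j \leq \ell-1$ and whose length is $c+\ell-j+1$. The cycle bound then gives $j \geq c+\ell-\ell_a+2$, which combined with $j \leq \ell_a-1$ and $c \geq \ell-\ell_a+2$ yields $\ell \leq (3\ell_a-5)/2$. I would close the contradiction by invoking the degree hypothesis $|N(v_1)| \geq \ell_{a+1}$ together with the containment $N(v_1) \subseteq \{v_2,\ldots,v_{\ell_a-1}\}$: the constraints on any ``late'' neighbor $v_j$ confine it to the narrow window $[c+\ell-\ell_a+2,\,\ell_a-1]$, and combining this with the Ore rule $v_{j-1}\notin N(v_\ell)$ restricts the available positions for $v_1$'s many neighbors too tightly. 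The symmetric argument yields $N(v_\ell) \subseteq \{v_c,\ldots, v_{\ell-1}\}$. Uniqueness of $v_c$ is then immediate: any common neighbor $v_{c'}$ satisfies $c' \in N(v_1) \cap N(v_\ell) \subseteq \{v_2,\ldots,v_c\} \cap \{v_c,\ldots,v_{\ell-1}\} = \{v_c\}$, so $c' = c$.

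The main obstacle I anticipate is closing the contradiction at the key step ruling out $v_j \in N(v_1)$ with $j > c$: although the cycle constructions above readily bound $\ell$ from above, the degree bound leaves a narrow margin, and an additional cycle---built for instance from a further P\'osa-style rotation applied to the rerouted path, or from exploiting two hypothetical ``late'' neighbors of $v_1$ simultaneously---may be needed to eliminate the remaining parameter values. The combinatorial delicacy lies in verifying vertex-disjointness of each constructed cycle throughout the rotations, and in tracking how the Ore rule compounds across successive reroutings of paths that are no longer the original $P$.
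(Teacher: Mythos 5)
Your plan correctly identifies the Ore-type rerouting, the cycle-length constraints, and the lower bound $\ell\geq 2\ell_{a+1}+1$ (which is the content of the paper's Case~1, $\ell\leq\ell_a$). However, the gap you acknowledge is real and not merely a ``narrow margin.'' Your single cycle (skipping $\{v_{c+1},\ldots,v_{j-1}\}$) gives $\ell\leq(3\ell_a-5)/2$, and for this to contradict $\ell\geq 2\ell_{a+1}+1$ one would need $3\ell_a < 4\ell_{a+1}+7$. The arithmetic inequality of the paper, namely $\ell_a<2\ell_{a+1}+1$ (see~(\ref{eq:lala})), is far too weak to supply this. Concretely, for $t=1$, $a=0$ (forbidding $C_h$) we have $\ell_0=\lceil h/2\rceil$, $\ell_1=\lceil h/3\rceil$, and $(3\ell_0-5)/2-(2\ell_1+1)\approx h/12$, which is positive already for $h\geq 42$; the bounds leave a whole window of values of $\ell$ unexcluded, so a contradiction never arises from this chain alone.

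The paper closes the gap not with one more ad-hoc cycle but with a systematic decomposition of the path into alternating blocks $A_0,B_1,A_1,\ldots,B_s,A_s$: each $A_i$ is a maximal consecutive stretch in which neighbors of $v_1$ precede neighbors of $v_\ell$ (hence contains at most one common neighbor), and each $B_i$ is a gap containing no neighbor of either endpoint. Deleting a gap $B_i$ from the path closes it into a cycle through the chords to $v_1$ and $v_\ell$, forcing $|B_i|\geq\ell-\ell_a+1$; meanwhile the degree bound gives $\sum_i|A_i|\geq 2\ell_{a+1}-s+1$ since the intersections are controlled block by block. Summing, $\ell\geq 2\ell_{a+1}+1+s(\ell-\ell_a)$, and any $s\geq 1$ (combined with $\ell\geq\ell_a+1$) forces $\ell_a\geq 2\ell_{a+1}+1$, contradicting~(\ref{eq:lala}). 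The key idea you are missing is to aggregate all gaps and all $A_i$-blocks at once so that the constraint grows linearly in $s$; a single chord pair is quantitatively too weak, and further isolated rotations would face the same deficit.
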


This concludes Case 3.~\\

\noindent \textbf{Case 4:} $a=t-1$ and $p\in [p_0,1/2)$.~\\
\indent Recall that $\tk$ is a CRG with $a=t-1$ white vertices. By Proposition~\ref{Prop:components}, $g_{\tk}^{-1}(p)=(t-1) p^{-1}+g_K^{-1}(p)$. Therefore,
\begin{align*}
   g_K(p) < g_0(t-1,t;p) = \left(\max_{a'\in\{0,1,\ldots,t\}}\left\{\frac{a'-(t-1)}{p}+\frac{\ell_{a'}-1}{1-p}\right\}\right)^{-1}  \leq \frac{1-p}{\ell_{t-1}-1} .
\end{align*}

Again, we consider the graph $F$ with vertex set $\vk$ and edge set $\egk$. By Lemma~\ref{lem:degrees}, every vertex in $F$ has degree at least $\ell_t$ and every pair of vertices has at least one common neighbor.  By Lemma~\ref{lem:cycle_length_long}, $F$ has no cycle of length more than $\ell_{t-1}-1$. If there is a maximum-length path that is a cycle, then Proposition~\ref{prop:Hamiltonian} gives that $F$ is Hamiltonian, which means $|\vk|\leq\ell_{t-1}-1$. As a result, $g_K(p)\geq \frac{1-p}{\ell_{t-1}-1}$, a contradiction.

So we may assume that every maximum-length path in $F$ is not a cycle. Let $v_1\ldots v_{\ell}$ be such a maximum-length path such that, in $K$, the sum $\x(v_1)+\x(v_{\ell})$ is the largest among such paths. Let $v_c$ be the unique common neighbor of $v_1$ and $v_{\ell}$.

Let $v_1$ have $d$ neighbors in $F$. Since $v_1$ cannot have neighbors outside of this path, the sum of the weights, in $K$, of the neighbors of $v_1$ satisfy $d_G(v_1)\leq\x(v_2)+\cdots+\x(x_c)$. Notice that if $v_i\in\{v_1,\ldots,v_{c-1}\}$ is a predecessor of a neighbor of $v_1$, then it is an endpoint of a path containing the same $\ell$ vertices, namely $v_iv_{i-1}\cdots v_1v_{i+1}v_{i+2}\cdots v_c\cdots v_{\ell}$. Hence all $d$ predecessors of gray neighbors of $v_1$ (including $v_1$ itself) have weight at most $\x(v_1)$. All other vertices have weight at most $\frac{g_K(p)}{1-p}$. Proposition~\ref{Prop:M_sym} gives
\begin{align*}
   \frac{p-g_K(p)}{p}+\frac{1-p}{p}\x(v_1)=\x(v_1)+d_G(v_1)\leq \x(v_1)+\cdots+\x(v_c)\leq d\x(v_1)+(c-d)\frac{g_K(p)}{1-p} .
\end{align*}

Rearranging the terms, we obtain
\begin{align*}
   g_K(p)\left(\frac{c-d}{1-p}+\frac{1}{p}\right) \geq 1-\x(v_1)\left(d-\frac{1-p}{p}\right) .
\end{align*}

Since $p^{-1}\leq p_0^{-1}=\ell_t$ and $\ell_t<d+1$, we may, by Lemma~\ref{lem:degrees}, lower bound the right-hand side by using $\x(v_1)\leq\frac{g_K(p)}{1-p}$ from Proposition~\ref{Prop:x_bd},
\begin{align*}
   g_K(p)\left(\frac{c-d}{1-p}+\frac{1}{p}\right) &\geq 1-\frac{g_K(p)}{1-p}\left(d-\frac{1-p}{p}\right) \\
   g_K(p)\left(\frac{c}{1-p}\right) &\geq 1 .
\end{align*}

Lemma~\ref{lem:cycle_length_long} bounds the size of the longest cycle, so $c\leq \ell_{t-1}-1$. Thus, $g_K(p)\geq\frac{1-p}{c}\geq\frac{1-p}{\ell_{t-1}-1}\geq g_0(t-1,t;p)$, a contradiction. This concludes Case 4.~\\
 
\noindent\textbf{Case 5:} $a= t-1$ and $p \in (0,p_0)$.~\\
\indent It remains to prove the theorem for $0< p < p_0=\ell_t^{-1}$ in the case where $(t+1)\mid\!\!\!\not\;\; h$ and $a=t-1$.

\begin{fact}\label{fact:pzerovalue}
   Let $h$ and $t$ be positive integers such that $h\geq 2t+2$. Let $p_0=\ell_t^{-1}=\left\lceil\frac{h}{2t+1}\right\rceil^{-1}$ and recall that
   \begin{align*}
      \gamma_{\hh}(p) = \min_{a\in\{0,\ldots,t\}}\left\{\frac{p}{t+1}, \frac{p(1-p)}{a(1-p)+(\ell_a-1)p}\right\} .
   \end{align*}
   Then $\gamma_{\hh}(p)=p/(t+1)$ for $p\in [0,p_0]$.
\end{fact}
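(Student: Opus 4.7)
The plan is to exploit the fact that $\gamma_{\hh}(p)$ is already written as the minimum of a short list containing the linear candidate $p/(t+1)$ together with the quadratic candidates $p(1-p)/(a(1-p)+(\ell_a-1)p)$ for $a\in\{0,1,\ldots,t\}$. So it suffices to show that on $[0,p_0]$ the linear candidate is pointwise no larger than any of the quadratic ones. At $p=0$ this is immediate (using the $a=0$ boundary convention recorded in the note after Theorem~\ref{Thm:established_gamma}).

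For $p\in(0,p_0]$ I would divide by $p$ and cross-multiply by the positive denominator $a(1-p)+(\ell_a-1)p$ (positive because $\ell_a\geq\ell_t\geq 2$ under the hypothesis $h\geq 2t+2$). The comparison becomes the linear inequality $(\ell_a-1)p\leq(t+1-a)(1-p)$, which rearranges to $p\leq(t+1-a)/(\ell_a+t-a)$. Thus the entire claim reduces to checking
\[
   p_0=\frac{1}{\ell_t}\leq\frac{t+1-a}{\ell_a+t-a}\qquad\text{for every }a\in\{0,\ldots,t\}.
\]
The case $a=t$ gives equality, which is a reassuring sanity check: the $a=t$ quadratic and the line $p/(t+1)$ meet exactly at $p_0$. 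For $a\in\{0,\ldots,t-1\}$ the inequality is equivalent to the integer statement
\[
   \ell_a\leq(t+1-a)\,\ell_t-(t-a).
\]

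I would establish this last inequality by invoking the characterization of $\ell_a$ as the least integer $m$ with $m(t+a+1)\geq h$: substitute the candidate $m=(t+1-a)\ell_t-(t-a)$, expand $m(t+a+1)=((t+1)^2-a^2)\ell_t-(t-a)(t+a+1)$, and use $(2t+1)\ell_t\geq h$ (which is exactly the definition of $\ell_t$). That replacement reduces the task to $((t+1)^2-a^2)\ell_t-(t-a)(t+a+1)\geq(2t+1)\ell_t$, which simplifies to $(t^2-a^2)\ell_t\geq(t-a)(t+a+1)$. Cancelling the positive factor $t-a$ leaves $(t+a)\ell_t\geq t+a+1$, which follows instantly from $\ell_t\geq 2$ (a consequence of $h\geq 2t+2$) together with $t+a\geq 1$. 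The only step that takes a moment of thought is spotting that $(2t+1)\ell_t\geq h$ is the right replacement; there is no real obstacle.
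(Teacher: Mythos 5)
Your proof is correct. It reduces the claim to the same integer inequality as the paper, namely $\ell_a \leq (t+1-a)\ell_t-(t-a)$ for all $a\in\{0,\ldots,t-1\}$, but then proves that inequality by a cleaner, case-free argument. The paper instead shows only the endpoint equality $\gamma_{\hh}(p_0)=p_0/(t+1)$ (implicitly relying on the convexity of the quadratic candidates through the origin to extend to all of $[0,p_0]$), reduces to the equivalent form $\frac{\ell_a-1}{\ell_t-1}\leq t-a+1$, and then writes $h=q(2t+1)-r$ with $r\in\{0,\ldots,2t\}$, expands $\ell_a-1$ as $q-1+\bigl\lceil\frac{q(t-a)-r}{t+a+1}\bigr\rceil$, and handles the cases $q\geq 3$, $q=2$ with $a\leq t-2$, and $q=2$ with $a=t-1$ separately. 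Your substitution of the explicit candidate $m=(t+1-a)\ell_t-(t-a)$ into the minimality characterization of $\ell_a$, together with the defining inequality $(2t+1)\ell_t\geq h$, avoids all of that casework and reduces to the trivial bound $(t+a)\ell_t\geq t+a+1$. Your argument also makes the validity over the whole interval $[0,p_0]$ explicit by observing the comparison $p\leq(t+1-a)/(\ell_a+t-a)$ is monotone in $p$, which the paper leaves implicit. A reader comparing the two would likely find your version shorter and more transparent.
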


We have $\ed_{\hh}(p)\leq\gamma_{\hh}(p)$ and the previous case gives that the two functions are equal at $p=p_0$. They are also equal at $p=0$. By Fact~\ref{fact:pzerovalue}, the function $\gamma_{\hh}(p)$ is linear over $p\in [0,p_0]$ for $h\geq 2t+2$. By Proposition~\ref{prop:basic}, $\ed_{\hh}(p)$ is continuous and concave down, so we may conclude that $\ed_{\hh}(p)=\gamma_{\hh}(p)=\frac{p}{t+1}$ for $p\in [0,p_0]$.~\\

This concludes Case 5 and completes the proof of Theorem~\ref{Thm:main}.
\end{proof}

\section{Proofs of Lemmas and Facts}
\label{sec:proofs}
\begin{proof}[Proof of Corollary \ref{cor:established_gamma}]
The case of $t=1$ is covered by Corollary~\ref{cor:cycles}.

Let $a\in\{1,\ldots,t-1\}$.
\begin{align*}
   &\mbox{If } p \geq \frac{a}{a+\ell_0-\ell_a}, &&\mbox{then } \frac{p(1-p)}{a(1-p)+(\ell_a-1)p} \geq \frac{1-p}{\ell_0-1} . \\
   &\mbox{If } p \leq \frac{t-a}{t-a+\ell_a-\ell_t}, &&\mbox{then }
\frac{p(1-p)}{a(1-p)+(\ell_a-1)p} \geq \frac{p(1-p)}{t(1-p)+(\ell_t-1)p}. \\
\end{align*}

Therefore, it suffices to show
\begin{align}
   \frac{t-a}{t-a+\ell_a-\ell_t} &\geq \frac{a}{a+\ell_0-\ell_a} \nonumber \\
   (\ell_0-\ell_a)(t-a) &\geq (\ell_a-\ell_t)a . \label{eq:suffices}
\end{align}

To that end,
\begin{align*}
   (\ell_0-\ell_a)(t-a)-(\ell_a-\ell_t)a &= (t-a)\ell_0+a\ell_t-t\ell_a \\
   &> \frac{(t-a)h}{t+1}+\frac{ah}{2t+1}-\frac{th}{t+a+1}-t \\
   &= \frac{at(t-a)h}{(t+1)(t+a+1)(2t+1)}-t \\
   &\geq \frac{t(t-1)h}{(t+1)(2t)(2t+1)}-t .
\end{align*}

If $h\geq 4t^2+10t+12+\frac{12}{t-1}$, then (\ref{eq:suffices}) is satisfied and the corollary follows.
\end{proof}

\begin{proof}[Proof of Fact~\ref{fact:two_part_simple}]
We only need to prove one direction because $x$ and $y$ are arbitrary.  In both cases, we will prove the forward implication.
\begin{itemize}
\item[\textit{(\ref{case:xygeq})}] Let $\lfloor h/x\rfloor\geq y$ and $h=qx+r$, where $r\in\{0,\ldots,x-1\}$. Then $y\leq\lfloor h/x\rfloor=q$, so $h\geq xy+r$. Thus $\lfloor h/y\rfloor\geq x+\lfloor r/y\rfloor \geq x$.
\item[\textit{(\ref{case:xyleq})}] Let $\lceil h/x\rceil \leq y$ and $h=qx-r$, where $r\in\{0,\ldots,x-1\}$. Then $y\geq\lceil h/x\rceil=q$, so $h\leq yx-r$. Thus $\lceil h/y\rceil\leq x-\lfloor r/y\rfloor\leq x$.
\end{itemize}
\end{proof}
\begin{proof}[Proof of Fact \ref{fact:bounds_on_h}] Clearly, if $a\in\{0,\ldots,t-1\}$, then $\left\lceil\frac{h}{t+a+1}\right\rceil\leq\left\lceil\frac{h}{t+1}\right\rceil$ so it suffices to prove this fact for $a=0$. Let $h=qt+r$ with $r\in \{0,\ldots,t-1\}$. Since $h\geq t(t-1)$, we have $q\geq t-1 \geq r$. Then 
\begin{align*}
\left\lceil \frac{h}{t+1}\right\rceil = q + \left\lceil \frac{r-q}{t+1}\right\rceil \leq q = \left\lfloor \frac{h}{t}\right\rfloor.
\end{align*} 
\end{proof}

\begin{proof}[Proof of Fact~\ref{fact:tlarge}] If $h\geq (t+1)^2+1$, then $t+2\leq\lceil h/(t+1)\rceil=\ell_0$. Consequently,
\begin{align*}
   t+1\leq\frac{1}{2}(\ell_0+t)\leq p(\ell_0+t)
\end{align*}
and so $\frac{1-p}{\ell_0-1}\leq\frac{p}{t+1}$.

For $a\in\{1,\ldots,t\}$, let $h=q(t+1)+r$, where $r\in\{1,\ldots,t+1\}$. The bound $h\geq (t+1)(t+a)+1$ ensures $q\geq t+a$. Then,
\begin{align*}
   a+\left\lceil\frac{h}{t+a+1}\right\rceil &= a+\left\lceil\frac{q(t+a+1)+r-qa}{t+a+1}\right\rceil \\
   &= q+\left\lceil\frac{a(t+a+1)+r-qa}{t+a+1}\right\rceil \\
   &\leq q+\left\lceil\frac{a(t+a+1)+t+1-(t+a)a}{t+a+1}\right\rceil \\
   &\leq q+1=\left\lceil\frac{h}{t+1}\right\rceil 
\end{align*}
and so $\frac{1-p}{\ell_0-1}\leq\frac{p(1-p)}{a(1-p)+(\ell_a-1)p}$.
\end{proof}

\begin{proof}[Proof of Lemma~\ref{lem:degrees}]~\\
\begin{itemize}

\item[\textit{(\ref{case:deg})}]
Let $v\in\vk$. Using Proposition~\ref{Prop:M_sym},
\begin{alignat*}{2}
\deg_G(v) & \geq \left\lceil\frac{d_G(v)}{\max \lbrace \x(u)\rbrace}\right\rceil \geq \left\lceil \frac{\frac{p-g_K(p)}{p}+\frac{1-2p}{p}\x(v)}{\frac{g_K(p)}{1-p}}\right\rceil \\
&\geq \frac{(p-g_K(p))(1-p)}{pg_K(p)}=\frac{1-p}{g_K(p)}-\frac{1-p}{p}\\
&> \underset {a' \in \lbrace 0,1,\ldots ,t \rbrace }{\max}\left\lbrace\frac{(a'-a)(1-p)+\left( \ell_{a'} -1 \right) p}{p}-\frac{1-p}{p}\right\rbrace\\
&=\underset {a' \in \lbrace 0,1,\ldots ,t \rbrace }{\max} \left\lbrace \frac{(a'-a-1)(1-p)}{p} + \ell_{a'} -1 \right\rbrace\\
&\geq \ell_{a+1} -1.
\end{alignat*}
The last inequality is obtained by choosing $a'=a+1$.

\item[\textit{(\ref{case:codeg})}]
By inclusion-exclusion, $1\geq d_G(v) + d_G(w)-d_G(v,w)$, we have that $d_G(v,w)\geq 2\frac{p-g_K(p)}{p}+\frac{1-2p}{p}(\x(v)+\x(w))-1>\frac{p-2g_K(p)}{p}$. Therefore,
\begin{alignat*}{2}
\deg_G(v,w) & \geq \left\lceil\frac{d_G(v,w)}{\max \lbrace \x(u)\rbrace}\right\rceil \geq \left\lceil \frac{\frac{p-2g_K(p)}{p}}{\frac{g_K(p)}{1-p}}\right\rceil =\frac{1-p}{g_K(p)}-\frac{2(1-p)}{p}\\
&> \underset {a' \in \lbrace 0,1,\ldots ,t \rbrace }{\max}\left\lbrace\frac{(a'-a)(1-p)+\left( \ell_{a'} -1 \right) p}{p}-\frac{2(1-p)}{p}\right\rbrace\\
&= \underset {a' \in \lbrace 0,1,\ldots ,t \rbrace }{\max}\left\lbrace\frac{(a'-a-2)(1-p)}{p}+ \ell_{a'} -1 \right\rbrace\\.
\end{alignat*}

If $a\leq t-2$, then we choose $a'=a+2$. Then $\deg_G(v,w)>\ell_{a+2}-1$, and because $\deg_G(v,w)$ is an integer, $\deg_G(v,w)\geq\ell_{a+2}$.

If $a=t-1$, then we choose $a'=t$. Then $\deg_G(v,w)>-\frac{1-p}{p}+\ell_t-1=\ell_t-p^{-1}\geq 0$, since $p\geq p_0=\ell_t^{-1}$. Because $\deg_G(v,w)$ is an integer, $\deg_G(v,w)\geq 1$.
\end{itemize}
\end{proof}

\begin{proof}[Proof of Lemma~\ref{lem:cycle_length_long}]~\\
\indent We say that a long cycle is a cycle of length at least $L+1$ and will show that there are no long cycles. Let $v_1\cdots v_{\ell}$ be a smallest cycle in $G$ among all those length greater than $L$.~\\

\noindent\textbf{Case 1:} $0\leq a\leq t-2$.~\\
\indent Observe that this case requires $t\geq 2$. Consider the path $v_1\cdots v_{\ell_a-1}$ on the cycle $v_1 \cdots v_{\ell} v_1$. There is no cycle of length $\ell_a$ and so the common neighbors of $v_1$ and $v_{\ell_a-1}$ are all in $\{v_2,\ldots,v_{\ell_a-2}\}$. Note that Lemma~\ref{lem:degrees} establishes that $v_1$ and $v_{\ell_a-1}$ have at least $\ell_{a+2}\geq 2$ common neighbors.

Since all common neighbors of $v_1$ and $v_{\ell_a-1}$ are in $\{v_2,\ldots,v_{\ell_a-2}\}$, we have $\ell_a-3\geq\ell_{a+2}$. Hence,
\begin{align*}
   \frac{h}{t+a+3}\leq\left\lceil\frac{h}{t+a+3}\right\rceil\leq\left\lceil\frac{h}{t+a+1}\right\rceil-3<\frac{h}{t+a+1}-2
\end{align*}
and so $h>(t+a+1)(t+a+3)$.

This gives that the number of common neighbors of $v_1$ and $v_{\ell_a-1}$ is at least $\ell_{a+2}=\left\lceil\frac{h}{t+a+3}\right\rceil\geq t+a+2\geq 4$.

Therefore, $v_1$ and $v_{\ell_a-1}$ has at least two common neighbors in $\{v_3,\ldots,v_{\ell_a-3}\}$. Let $i>2$ and $j<\ell_a-2$ be, respectively, the smallest and largest indices of vertices in $\{v_3,\ldots,v_{\ell_a-3}\}$ that are common neighbors of $v_1$ and $v_{\ell_a-1}$. That is, $3\leq i\leq j\leq\ell_a-3$. The cycle $v_1v_iv_{i+1}\cdots v_{\ell-1}v_{\ell}$ has length $\ell-i+2$. The cycle $v_1v_2\cdots v_{j-1}v_jv_{\ell_a-1}v_{\ell_a}\cdots v_{\ell-1}v_{\ell}$ has length $j+\ell-\ell_a+2$.

Since these two cycles have length less than $\ell$, they cannot be long cycles. Hence, their length is at most $\ell_a-1$, giving us
\begin{align*}
   \ell-i+2 &\leq \ell_a-1 \\
   \ell + j-\ell_a+2 &\leq \ell_a-1 .
\end{align*}
We can add these inequalities and use the fact that $\ell\geq L+1$. Rearranging the terms, we conclude the following:
\begin{align}
   3\ell_a-2L-7\geq 3\ell_a-2\ell-5\geq j-i+1
   \geq\ell_{a+2}-2. \label{eq:longcycles}
\end{align}

To verify there are no long cycles, we must show that (\ref{eq:longcycles}) produces a contradiction. Since $0\leq a\leq t-2$,
\begin{align*}
   3\ell_{a}-2L-7 &= 3\left\lceil\frac{h}{t+a+1}\right\rceil -2\left\lfloor\frac{h}{t}\right\rfloor -7 \\
   &< 3\left(\frac{h}{t+a+1}+1\right)-2\left(\frac{h}{t}-1\right)-7 \\
   &= \frac{h}{t+a+3}-2-\frac{2h(at+a^2+4a+3)}{t(t+a+1)(t+a+3)} \\
   &< \left\lceil\frac{h}{t+a+3}\right\rceil-2 = \ell_{a+2}-2 ,
\end{align*}
a contradiction for all $t\geq 2$ and $h\geq 2t+2$. Therefore, for $0\leq a\leq t-2$, $G$ has no cycle of length longer than $\ell_a-1$.~\\

\noindent\textbf{Case 2:} $a=t-1$.~\\
\indent Since all common neighbors of $v_1$ and $v_{\ell_{t-1}-1}$ are in $\{v_2,\ldots,v_{\ell_{t-1}-2}\}$, we have $\ell_{t-1}-3\geq 1$. Hence,
\begin{align*}
   1\leq\ell_{t-1}-3=\left\lceil\frac{h}{2t}\right\rceil-3<\frac{h}{2t}-2
\end{align*}
and so $h>6t$, which means $\ell_{t-1}\geq 4$.

Consider the path $v_1\cdots v_{\ell_{t-1}}$ on the cycle $v_1 \cdots v_{\ell} v_1$. To see there is no cycle of length $\ell_{t-1}+1$, we set $h=q(2t)-r$ with $q\geq 2$ and $r\in\{0,\ldots,2t-1\}$ and have
\begin{align*}
   \ell_{t-1}+1 = \left\lceil\frac{h}{2t}\right\rceil+1 = q+1 \leq 2q-2 \leq 2q+\left\lfloor\frac{-r}{t}\right\rfloor \leq \left\lfloor\frac{h}{t}\right\rfloor = L .
\end{align*}

Since $v_1$ and $v_{\ell_{t-1}}$ have a common neighbor $v_i$, either $v_1v_iv_{i+1}\cdots v_{\ell}v_1$ or  $v_1\cdots v_iv_{\ell_{t-1}}v_{\ell_{t-1}+1}\cdots v_{\ell}v_1$ has length less than $\ell$.  Without loss of generality, we will assume that it is the former. This gives
\begin{align*}
   \ell_{t-1}-1 \geq \ell-i+2 \geq \ell-(\ell_{t-1}-1)+2 \geq L+1-(\ell_{t-1}-1)+2 .
\end{align*}

Consequently,
\begin{align}
   2\ell_{t-1}-L-5 \geq 0 . \label{eq:longcycles:tminus1}
\end{align}
To see that (\ref{eq:longcycles:tminus1}) is contradicted,
\begin{align*}
   2\ell_{t-1}-L-5 &= 2\left\lceil\frac{h}{2t}\right\rceil -\left\lfloor\frac{h}{t}\right\rfloor -5 \\
   &< 2\left(\frac{h}{2t}+1\right)-\left(\frac{h}{t}-1\right)-5 = -2< 0 .
\end{align*}

Therefore, for $a=t-1$, $G$ has no cycle of length longer than $\ell_{t-1}-1$.~\\
\end{proof}

\begin{proof}[Proof of Proposition~\ref{prop:Hamiltonian}]
Let $v_1\cdots v_{\ell}$ be a longest path in $G$ such that $v_1v_{\ell}\in E(G)$. If $G$ is not Hamiltonian, there exists a $w\in V(G)-\{v_1,\ldots,v_{\ell}\}$. Because $G$ is connected, there exists $i\in\{1,\ldots,\ell\}$ and $w'\in V(G)-\{v_1,\ldots,v_{\ell}\}$ such that $v_i$ is adjacent to $w'$. Then there is a longer path: $v_{i+1}\cdots v_{\ell}v_1\cdots v_iw'$, a contradiction.
\end{proof}

\begin{proof}[Proof of Lemma~\ref{Lem:gap}]~\\
Because $v_1 \cdots v_{\ell}$ is a longest path in $F$, neither $v_1$ nor $v_k$ can have neighbors off this path, as that would yield a longer path. Thus $N(v_1)\cup N(v_{\ell})\subseteq\{v_1,\ldots,v_k\}$ in $F$.~\\

\noindent\textbf{Case 1:} $\ell\leq\ell_a$.~\\
\indent If $v_i$ is adjacent to $v_1$, then $v_{i-1}$ cannot be adjacent to $v_{\ell}$. Thus, the predecessors of $N(v_1)$ and the neighbors of $v_{\ell}$ are disjoint subsets in $\{v_1,\ldots,v_{\ell-1}\}$. Since both $v_1$ and $v_{\ell}$ have degree at least $\ell_{a+1}$, hence
\begin{align*}
   2\ell_{a+1}\leq \ell-1\leq \ell_a-1 .
\end{align*}
However,
\begin{align}
   \ell_a-2\ell_{a+1}-1 &= \left\lceil\frac{h}{t+a+1}\right\rceil-2\left\lceil\frac{h}{t+a+2}\right\rceil-1 \nonumber \\
   &< \frac{h}{t+a+1}-\frac{2h}{t+a+2}=-\frac{h(t+a)}{(t+a+1)(t+a+2)}<0 . \label{eq:lala}
\end{align}~\\

\noindent\textbf{Case 2:} $\ell\geq\ell_a+1$.~\\
\indent Partition the vertices of this path into $2s+1$ consecutive sets $A_0,B_1,A_1,\ldots,A_s,B_s$ with $s\geq 0$, constructed so that, in each set $A_i$, neighbors of $v_1$ appear before neighbors of $v_{\ell}$ as follows:

We let neighbors of $v_1$ be denoted with $v_{p_i}$ and neighbors of $v_{\ell}$ be denoted with $v_{q_i}$ in this construction. Let $A_0$ contain $v_1$ and add consecutive vertices of this path until we arrive at a neighbor of $v_{\ell}$. From this point forward we do not allow another neighbor of $v_1$ to be in $A_0$, i.e. we continue adding consecutive vertices until we reach the last neighbor $v_{q_0}$ of $v_{\ell}$ before another neighbor $v_{p_1}$ of $v_1$. Then $A_0=\{v_1,\ldots,v_{q_0}\}$, and we define $B_1=\{v_{q_0+1},\ldots,v_{p_1-1}\}$. Note that this definition does not preclude $B_1$ being an empty set. Continuing with this algorithm, we define sets $A_1=\{v_{p_1},\ldots,v_{q_1}\}$ and $B_2=\{v_{q_1+1},\ldots,v_{p_2-1}\}$, where $v_{p_1}$ is a neighbor of $v_1$ on this path, $v_{q_1}$ is the last neighbor of $v_{\ell}$ in $A_1$ before another neighbor $v_{p_2}$ of $v_1$ as shown in Figure~\ref{Figure}. We continue in this way and define sets $A_i=\{v_{p_i},\ldots,v_{q_i}\}$ and $B_i=\{v_{q_{i-1}+1},\ldots,v_{p_i-1}\}$ for $i\in\{1,\ldots,s\}$, adding the last vertex $v_{\ell}$ into the set $A_s$.

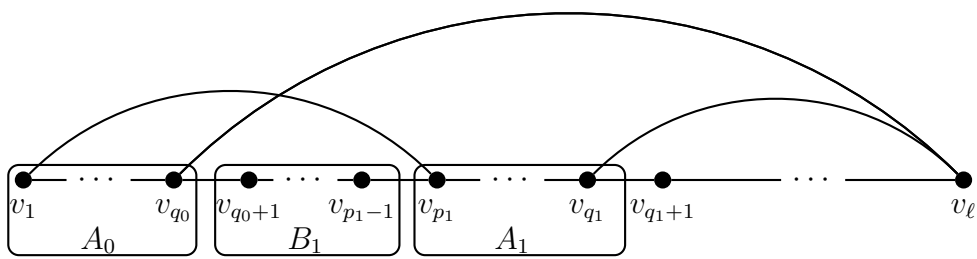
\begin{figure}[!h]
\centering
\begin{tikzpicture}[bend angle=45]
\tikzstyle{knode}=[circle,draw=black,fill=black,thick,inner sep=2pt]
\tikzstyle{place}=[circle,draw=blue!50,fill=blue!20,thick,inner sep=0pt,minimum size=6mm]

\node (v_1) at (0,0) [knode,label=below:$v_1$] {};
\node (v_2_dots) at (1,0)  [] {$\cdots$};
\node (v_q_0) at (2,0) [knode,label=below:$v_{q_0}$] {};
\node (v_q_0+1) at (3,0) [knode,label=below:$v_{q_0+1}$] {};
\node (v_q_0+1_dots) at (3.75,0) [] {$\cdots$};
\node (v_p_1-1) at (4.5,0) [knode,label=below:$v_{p_1-1}$] {};
\node (v_p_1) at (5.5,0) [knode,label=below:$v_{p_1}$] {};
\node (v_p_1_dots) at (6.5,0) [] {$\cdots$};
\node (v_q_1) at (7.5,0) [knode,label=below:$v_{q_1}$] {};
\node (v_q_1+1) at (8.5,0) [knode,label=below:$v_{q_1+1}$] {};
\node (v_q_1+1_dots) at (10.5,0) [] {$\cdots$};
\node (v_ell) at (12.5,0) [knode,label=below:$v_{\ell}$] {};
\path (v_1) edge [thick] node {} (v_2_dots)
(v_1) edge [bend left,thick] node {} (v_p_1)
(v_q_0) edge [bend left, thick] node {} (v_ell)
(v_q_0) edge [thick] node {} (v_q_0+1)
(v_p_1-1) edge [thick] node {} (v_p_1)
(v_q_1) edge [thick] node {} (v_q_1+1)
(v_q_0) edge [bend left, thick] node {} (v_ell)
(v_q_1) edge [bend left, thick] node {} (v_ell)
(v_2_dots) edge [thick] node {} (v_q_0)
(v_q_0+1) edge [thick] node {} (v_q_0+1_dots)
(v_q_0+1_dots) edge [thick] node {} (v_p_1-1)
(v_p_1) edge [thick] node {} (v_p_1_dots)
(v_p_1_dots) edge [thick] node {} (v_q_1)
(v_q_1+1) edge [thick] node {} (v_q_1+1_dots)
(v_q_1+1_dots) edge [thick] node {} (v_ell);
\draw[rounded corners,thick] (-0.2,-1) rectangle (2.3,0.2);
\node (A_0) at (1,-0.8) {$A_0$};
\draw[rounded corners,thick] (2.55,-1) rectangle (5,0.2);
\node (B_1) at (3.75,-0.8) {$B_1$};
\draw[rounded corners,thick] (5.2,-1) rectangle (8,0.2);
\node (A_1) at (6.5,-0.8) {$A_1$};

\end{tikzpicture}
\caption{Partition of vertices of the path. Sets $A_i$ are iteratively constructed so that they contain consecutive vertices of this path starting with a neighbor of $v_1$ and ending with the last neighbor of $v_{\ell}$ so that no neighbor of $v_1$ appears after neighbors of $v_{\ell}$ in each set. Sets $B_i$ contain consecutive vertices between sets $A_{i-1}$ and $A_i$, if there are any. The first vertex is placed in $A_0$ and the last vertex $v_{\ell}$ in $A_s$.}\label{Figure}\end{figure}

Now we analyze this partition:
\begin{itemize}
   \item We call the sets $B_i$, $i\in\{1,\ldots,s\}$, \textit{gaps} as they do not contain any neighbors of either $v_1$ or $v_{\ell}$, but only contain vertices that succeed a given neighbor of $v_{\ell}$ and precede a given neighbor of $v_1$. According to the definition, gaps may be empty, but we will see below that this is not possible in this case.
   \item Each set $A_i$, $i\in\{0,\ldots,s\}$, contains at most one common neighbor of $v_1$ and $v_{\ell}$.
   \item By construction, neighbors of $v_1$ (other than a common neighbor, if exists) precede neighbors of $v_{\ell}$ in each $A_i$, $i\in\{0,\ldots,s\}$.
\end{itemize}

It will suffice to show that $s=0$. This will imply that no neighbor of $v_1$ follows the first neighbor of $v_{\ell}$ on this path, which further implies that $N(v_1)$ entirely precedes $N(v_{\ell})$, except possibly for a single common vertex. Since $v_1$ and $v_{\ell}$ have at least one common neighbor, the lemma will follow.

Notice that $v_1 \cdots v_{q_0} v_{\ell} v_{\ell-1} \cdots v_{p_1} v_1$ is a cycle as seen in Figure~\ref{Figure}. In fact, for any $i\geq 1$, removing the gap $B_i$ from vertices $\{v_1,\ldots,v_{\ell}\}$ forms a cycle, so by assumption, $\ell-|B_i|\leq\ell_a-1$ and none of the gaps can be empty. Therefore, $\sum_{i=1}^s |B_i|\geq s(\ell-\ell_a+1)$.

On the other hand, by the degree assumption and since each set $A_i$ contains at most one common neighbor of $v_1$ and $v_{\ell}$, we obtain $2\ell_{a+1}\leq |N(v_1)|+|N(v_{\ell})|\leq \left(\sum_{i=0}^s|A_i|\right)+(s+1)-2$. Combining these two inequalities we have
\begin{align*}
   \ell = \sum_{i=0}^s|A_i| + \sum_{i=1}^s|B_i| &\geq 2\ell_{a+1} - (s+1) + 2 + s(\ell-\ell_a+1) \\
   &= s(\ell - \ell_a) + 2\ell_{a+1} + 1.
\end{align*}

If $s\geq 1$, then we have $\ell\geq\ell-\ell_a+2\ell_{a+1}+1$ which simplifies to $\ell_a-2\ell_{a+1}-1\geq 0$, which is contradicted by (\ref{eq:lala}). Therefore $s=0$ and the lemma follows.
\end{proof}

\begin{proof}[Proof of Fact~\ref{fact:pzerovalue}]
We need to show that $\gamma_{\hh}(p_0)=p_0/(t+1)$. Since
\begin{align*}
   \gamma_{\hh}(p_0)=p_0\cdot\min_{a\in\{0,\ldots,t\}} \left\{\frac{1}{t+1},\frac{1-p_0}{a(1-p_0)+\left(\ell_a-1\right)p_0}\right\} ,
\end{align*}
we need to show that $\frac{\ell_a-1}{\ell_t-1}\leq t-a+1$ for all $a\in\{0,\ldots,t-1\}$.

To do this, let $h=q(2t+1)-r$ where $r\in\{0,\ldots,2t\}$ and $q\geq 2$ (because $h\geq 2t+2$). Then,
\begin{align*}
   \frac{\ell_a-1}{\ell_t-1} &= \frac{1}{q-1}\left(q-1+\left\lceil\frac{q(t-a)-r}{t+a+1}\right\rceil\right) \\
   &\leq \frac{1}{q-1}\left(q-1+\frac{q(t-a)+t+a}{t+a+1}\right) \\
   &= t-a+1+\frac{t^2-a^2+2t-q(t^2-a^2)}{(q-1)(t+a+1)} ,
\end{align*}
which is at most $t-a+1$ if $q\geq 3$ or if $a\leq t-2$ and $q=2$.  In the case where $a=t-1$ and $q=2$, then $\frac{\ell_a-1}{\ell_t-1}=1+\left\lceil\frac{2-r}{2t}\right\rceil\leq 2=t-a+1$.
\end{proof}

\section{Conclusion and open questions}
\label{sec:divisibility_p_small}

We have obtained the edit distance function over all of its domain for $C_h^t$ when $t+1$ does not divide $h$ and $h\geq 2t(t+1)+1$. When $t+1$ divides $h$ and $h\geq 2t(t+1)+1$, we have obtained the function for $p\in [p_0,1]$, where $p_0=\left\lceil\frac{h}{2t+1}\right\rceil^{-1}$. The function, however, is not known when $t+1$ divides $h$ and $p\in [0,p_0)$ or when $h\leq 2t(t+1)$. 

As to the case of $p<p_0$ (and $h$ sufficiently large), we showed that if $K\in\K(\forb(C_h^t))$ is a $p$-core CRG with $p<1/2$ which has $a\neq t-1$ white vertices, then $g_K(p)= \gamma_{\forb(C_h^t)}(p)$. Therefore, to solve the problem for the remaining case when $t+1$ divides $h$, and $p$ is small, one only needs to consider CRGs with exactly $t-1$ white vertices. A particular barrier to this is Lemma~\ref{lem:degrees} which requires $p\geq p_0$ to ensure that the graph induced by the black vertices and gray edges of the CRG has the property that any two vertices have at least one common neighbor. Such a condition need not hold for small $p$.

As to reducing the lower bound required of $h$, we note that in the proof of Theorem~\ref{Thm:main}, we required $h\geq 2t(t+1)+1$ in Fact~\ref{fact:tlarge}.  This ensured that the $\gamma_{\hh}$ function for $p\in [1/2,1]$ was linear and by the concavity and continuity of the edit distance function (see Proposition~\ref{prop:basic}), this ensures that $\ed_{\hh}(p)=\gamma_{\hh}(p)$ in that interval.  So, more careful analysis of the case $p\geq 1/2$ may enable one to reduce the lower bound on $h$, but these arguments are very different from the case where $p<1/2$. Elsewhere, we only require $h\geq\max\{t(t-1),2t+2\}$ in order to complete the proof of Theorem~\ref{Thm:main}. This bound is required in several places. See Fact~\ref{fact:bounds_on_h}, Lemma~\ref{lem:cycle_length_short}, Lemma~\ref{lem:cycle_length_long} but especially the basic Fact~\ref{fact:numth} which says that a set of size $h$ can be partitioned into sets of size $t$ or $t+1$ if and only if $h\geq t(t-1)$. So we believe that it would be difficult to prove the theorem for values of $h$ smaller than $\max\{t(t-1),2t+2\}$ in general.

\section*{Acknowledgements}
\label{sec:acknowl}

Berikkyzy was supported through Wolfe Research Fellowship of Department of Mathematics at Iowa State University. Martin's research was partially supported by the National Security Agency (NSA) via grant H98230-13-1-0226. Martin's contribution was completed in part while he was a long-term visitor at the Institute for Mathematics and its Applications. He is grateful to the IMA for its support and for fostering such a vibrant research community. Peck's research is based upon work supported by the National Science Foundation Graduate Research Fellowship under Grant No. DGE0751279.


\begin{thebibliography}{10}

\bibitem{A-S} N. Alon and A. Stav. \newblock What is the furthest graph from a hereditary property? \newblock {\em Random Structures Algorithms} 33(1): 87--104, 2008.

\bibitem{A-K-M} M. Axenovich, A. K\'{e}zdy, and R. Martin. \newblock On the editing distance of graphs. \newblock {\em J. Graph Theory}, 58(2):123--138, 2008.

\bibitem{B-M} J. Balogh and R. Martin. Edit distance and its computation. \newblock {\em Electronic J. Combin.}, 15(1): P20, 2008.

\bibitem{Dirac} G.A. Dirac. \newblock Some theorems on abstract graphs. \newblock {\em Proc. London Math. Soc. (3)} 2:69--81, 1952.

\bibitem{M-T} E. Marchant and A. Thomason. \newblock Extremal graphs and multigraphs with two weighted colours. \newblock {\em Fete of combinatorics and computer science}, volume 20 of {\em Bolyai Soc. Math. Stud.}, pp. 239--286, J\'{a}nos Bolyai Math. Soc., 2010.

\bibitem{M_sym} R. Martin. \newblock The edit distance function and symmetrization. \newblock {\em The Electron. J. of Combin.}, 20(3) (2013), \#P26

\bibitem{M_split} R. Martin. \newblock On the computation of edit distance functions. \newblock {\em Discrete Mathematics}, 338(2) (2015) 291--305.

\bibitem{M_survey} R. Martin. \newblock The edit distance in graphs: methods, results and generalizations. \newblock To appear in the IMA volume \textit{Recent Trends in Combinatorics}.

\bibitem{M-MK} R. Martin and T. McKay. \newblock On the edit distance from $K_{2,t}$-free graphs. \newblock {\em J. Graph Theory}, 77(2): 117--143, 2014.

\bibitem{Ore} O. Ore. \newblock Note on Hamilton circuits. \newblock {\em Amer. Math. Monthly} 67:55, 1960.

\bibitem{P-W} A. Prowse, D. R. Woodall. \newblock Choosability of powers of circuits. \newblock {\em Graphs and Combinatorics}, Volume 19, Number 1, 2003, pp. 137-144.

\bibitem{West} D. West. \newblock Introduction to graph theory. Vol. 2. Upper Saddle River: Prentice Hall, 2001.

\end{thebibliography}
\end{document}